\newtheorem{theorem}{Theorem}[section]
\newtheorem*{lemma*}{Lemma}
\newtheorem{thm}{Theorem}[section]
\newtheorem{lem}{Lemma}[section]
\newtheorem{prop}{Proposition}[section]
\newtheorem{defn}{Definition}[section]
\numberwithin{equation}{section}
\theoremstyle{definition}
\newtheorem{definition}[theorem]{Definition}
\theoremstyle{remark}
\numberwithin{equation}{section}
\newcommand{\abs}[1]{\lvert#1\rvert}
\newcommand{\DD}{\mathbb{D}}
\newcommand{\dd}{\mathbb{D}}
\newcommand{\onto}{\xrightarrow[]{{}_{\!\!\textnormal{onto\,\,}\!\!}}}
\DeclareMathOperator{\diam}{diam}
\def\rr{{\mathbb R}}
\def\dd{{\mathbb D}}
\def\fz{\infty}
\def\boz{{\Omega}}
\def\bint{{\ifinner\rlap{\bf\kern.25em--}
\int\else\rlap{\bf\kern.45em--}\int\fi}\ignorespaces}
\def\bbint{{\ifinner\rlap{\bf\kern.25em--}
\hspace{0.078cm}\int\else\rlap{\bf\kern.45em--}\int\fi}\ignorespaces}
\def\diam{{\mathop\mathrm{\,diam\,}}}
\def\r{\right}
\def\lf{\left}
\long\def\colred#1\endred{{\color{red}#1}}
\long\def\colgreen#1\endgreen{{\color{green}#1}}
\long\def\colmagenta#1\endmagenta{{\color{magenta}#1}}
\long\def\colblue#1\endblue{{\color{blue}#1}}
\long\def\colyellow#1\endyellow{{\color{yellow}#1}}
\begin{document}

\title{Sobolev extension on $L^{p}$-quasidisks}

%\author[P. Koskela]{Pekka Koskela}
%\address{Department of Mathematics and Statistics, P.O.Box 35 (MaD) FI-40014 University of Jyv\"askyl\"a, Finland}
%\email{pekka.j.koskela@jyu.fi}

%\author[J. Onninen]{Jani Onninen}
%\address{Department of Mathematics, Syracuse University, Syracuse,
%NY 13244, USA and Department of Mathematics and Statistics, P.O.Box 35 (MaD) FI-40014 University of Jyv\"askyl\"a, Finland}
%\email{jkonnine@syr.edu}

\author[Z. Zhu]{Zheng Zhu}
\address{Department of Mathematics and Statistics, P.O.Box 35 (MaD) FI-40014 University of Jyv\"askyl\"a, Finland}
\email{zheng.z.zhu@jyu.fi}

\thanks{The author was supported by the Academy of Finland (project No. 323960). The author thanks Prof. P. Koskela and Prof. J. Onninen for some interesting discussion and Prof. T. Kilpel\"ainen for improving the writing of the paper.}

%    General info
%\subjclass[2010]{Primary 31A05; Secondary  35J25}

%\date{\today}

\keywords{Homeomorphism of finite distortion, Sobolev extension domains, $L^p$-quasidisks.}

\maketitle

\begin{abstract} 
In this paper, we study the Sobolev extension property of $L^p$-quasidisks which are the generalizations of classical quasidisks. After that, we also find some applications of this property. 
\end{abstract}

%\tableofcontents

\section{Introduction}
Let $\boz\subset\rr^2$ be a domain. A homeomorphism $h:\boz\to\rr^2$ is said to be quasiconformal, if $h\in W^{1, 2}_{\rm loc}(\boz, \rr^2)$ and the inequality 
\begin{equation}\label{eq:quasi}
|Dh(z)|^2\leq KJ_h(z),
\end{equation}
holds for almost every $z\in\boz$ with a constant $K\in[1, \fz)$ independent of $x$. A bounded simply connected domain $\boz\subset\rr^2$ is called a quasidisk, if there exists a global quasiconformal mapping from $h\colon\rr^2\onto\rr^2$ with $h(\boz)=\dd$. Quasidisks have lots of nice geometrical and potential properties, see the textbook by Gehring and Hag \cite{GeHag} and references therein. For example, a bounded simply connected planar domain is a quasidisk if and only if it is a uniform domain. Uniform domains were first introduced by Martio and Sarvas \cite{MS}. A domain $\boz\subset\rr^2$ is called uniform, if there exists a constant $C<\fz$ such that for every $z_1, z_2\in\boz$ there is a curve $\gamma_{z_1, z_2}\subset\boz$ with endpoints $z_1$ and $z_2$ and 
\[l(\gamma_{z_1, z_2})\leq Cd(z_1, z_2)\]
and for all $z\in\gamma_{z_1,z_2}$ it holds 
\[\min\left\{l(\gamma_{z_1,z}), l(\gamma_{z, z_2})\right\}\leq Cd(z, \rr^2\setminus\boz),\]
where $\gamma_{z_1, z}$ and $\gamma_{z, z_2}$ mean the subcurves of $\gamma_{z_1, z_2}$ from $z_1$ to $z$ and from $z$ to $z_2$ respectively. 
By Jone's result in \cite{Jones}, uniform domains hence quasidisks are Sobolev $(p, p)$-extension domains for arbitrary $1\leq p\leq\fz$. We say $\boz\subset\rr^2$ is a Sobolev $(p, q)$-extension domain, if for every $u\in W^{1, p}(\boz)$, there exists a function $E(u)\in W^{1, q}(\rr^2)$ with $E(u)\big|_\boz\equiv u$ and 
$$\|E(u)\|_{W^{1, q}(\rr^2)}\leq C\|u\|_{W^{1, p}(\boz)}$$
 with a positive constant $C$ independent of $u$.

There are lots of planar simply connected domains are not quasidisks, for example inward and outward cuspidal domains, see \cite{GKT, HK, IMb, IOZ, KT1, KT3, KT2}. Hence, it is natural to study generalizations of quasiconformal mappings. For instance, we concentrate on homeomorphisms of finite distortion here. A homeomorphism $h:\boz\to\rr^2$ is said to be a homeomorphism of finite distortion, if $h\in W^{1, 1}_{\rm loc}(\boz, \rr^2)$ and the inequality 
\begin{equation}\label{eq:HFD}
|Dh(z)|^2\leq K(z)J_h(z), 
\end{equation}
holds for almost every $z\in\boz$ with a measurable function $K(z)\in [1, \fz)$. For a homeomorphism of finite distortion $h$, we denote $K_h$ to be the optimal distortion function for (\ref{eq:HFD}) which will be defined below. If $K_h\in L^\fz(\boz)$, then $h$ is quasiconformal. The inverse of a quasiconformal mapping is still quasiconformal. However, if we relax the regularity of the distortion function from essentially boundedness to some other weaker one, we cannot hope the distortion of the inverse can attain the same regularity, see \cite{HKarma, HK}. Motivated by the Sobolev extension property of quasidisks, we can arise two following interesting problems.
\begin{enumerate}
\item What is the best Sobolev extension property of those domains which can be mapped onto the unit disk by homeomorphisms of finite distortion whose distortion functions satisfy some regularity?

\item What is the best Sobolev extension property of domains which are images of the unit disk under homeomorphisms of finite distortion whose distortion functions satisfy some regularity?
\end{enumerate}

In this paper, we mainly concentrate on the first problem with the condition that distortion functions are locally $L^p$-integrable for $1\leq p<\fz$. Under this condition, the corresponding bounded domains are called $L^p$-quasidisks. The terminology $L^p$-quasidisk was firstly introduced in the paper \cite{IOZ} by author with Iwaniec and Onninen, where we gave characterizations to polynomial cuspidal domains which are $L^p$-quasidisks simultaneously. In \cite{IOZ},  we showed that every bounded domain with a rectifiable boundary is a $L^1$-quasidisk. Hence, distortion function is locally $L^1$-integrable is not enough to permit Sobolev extension.
\begin{thm}\label{thm:L1}
There exists a $L^1$-quasidisk which is not a Sobolev $(p, q)$-extension domain for any $1\leq q\leq p<\fz$.
\end{thm}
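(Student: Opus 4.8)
The plan is to exhibit a single bounded, simply connected domain $\boz\subset\rr^2$ with rectifiable boundary — hence an $L^1$-quasidisk by the result of \cite{IOZ} quoted above — that fails extendability in the strongest possible way. The crucial preliminary observation is a monotonicity in the target exponent: for a bounded domain, being a Sobolev $(p,q)$-extension domain implies being a $(p,q')$-extension domain for every $q'\le q$ (multiply an extension $E(u)$ by a fixed cutoff supported in a ball $B\supset\overline{\boz}$ and apply H\"older's inequality on the bounded set $B$). Taking the contrapositive, it suffices to produce one $\boz$ that is \emph{not} a $(p,1)$-extension domain for \emph{every} $p\in[1,\infty)$; this automatically excludes all pairs $1\le q\le p<\infty$.

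For the construction I would start from the unit disk and attach a sequence of thin \emph{folded fingers} (hairpin-shaped corridors) $F_k$ pointing outward, with mouths accumulating at a single boundary point. The $k$-th finger is a tube of length $2\lambda_k$ and width $w_k$ that runs straight out, bends, and returns parallel to itself, dead-ending near (but disconnected from) its own mouth; the two parallel arms are separated by a complementary gap of width $\le w_k$, and the fold opens outward so that its interior gap meets the unbounded complement and $\boz$ stays simply connected. Choosing $\lambda_k=2^{-k}$ and $w_k=2^{-k^2}$ gives total boundary length $\lesssim 1+\sum_k\lambda_k<\infty$, so $\partial\boz$ is a rectifiable Jordan curve and $\boz$ is an $L^1$-quasidisk.

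As test functions I would take the $u_k$ that vanish on the disk and equal $t/(2\lambda_k)$ along $F_k$, where $t\in[0,2\lambda_k]$ is arclength from the mouth; these are fixed functions, independent of $p$. Because the profile is linear in arclength there is no concentrated gradient at the mouth, the bend, or the dead end, so
\[
\|u_k\|_{L^p(\boz)}^p\lesssim w_k\lambda_k,\qquad \|\nabla u_k\|_{L^p(\boz)}^p\lesssim w_k\lambda_k^{\,1-p},
\]
whence $\|u_k\|_{W^{1,p}(\boz)}\lesssim w_k^{1/p}\lambda_k^{1/p-1}$. On the other hand, any extension $v_k=E(u_k)\in W^{1,1}(\rr^2)$ restricts to $u_k$ on the two arms, and at matched positions across the gap the arm values differ by $1-s/\lambda_k$ (with $s\in[0,\lambda_k]$ arclength along an arm). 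Integrating the one–dimensional fundamental theorem of calculus across the gap on almost every transversal segment (Fubini together with the $\mathrm{ACL}$ property of $W^{1,1}$) yields
\[
\|\nabla v_k\|_{L^1(\rr^2)}\ \ge\ \int_0^{\lambda_k}\Bigl(1-\tfrac{s}{\lambda_k}\Bigr)\,ds\ =\ \tfrac12\lambda_k .
\]
Consequently $\|\nabla v_k\|_{L^1}/\|u_k\|_{W^{1,p}}\gtrsim \lambda_k^{2-1/p}w_k^{-1/p}=2^{\,k(k/p+1/p-2)}\to\infty$ for every fixed $p<\infty$, so no bounded extension operator $W^{1,p}(\boz)\to W^{1,1}(\rr^2)$ can exist; thus $\boz$ is not a $(p,1)$-extension domain, and by the reduction it is not a $(p,q)$-extension domain for any $1\le q\le p<\infty$.

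The step I expect to be the main obstacle is the lower bound on $\|\nabla v_k\|_{L^1}$: it has to be justified without assuming continuity of $u_k$ or $v_k$, by selecting a full-measure family of transversal segments reaching into both arms on which $v_k$ is absolutely continuous and whose endpoint values agree with the inner traces of $u_k$. The only other care is geometric — arranging the accumulating folds so that $\partial\boz$ is a genuine rectifiable Jordan curve, the fingers are pairwise disjoint, and each interior gap connects to the unbounded complement so that no holes are created — but this is routine once $\lambda_k=2^{-k}$ and $w_k=2^{-k^2}$ are fixed.
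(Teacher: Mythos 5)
Your argument is correct in outline and in its quantitative details, but it takes a genuinely different route from the paper. The paper's proof is two lines: it also invokes \cite[Theorem 1.8]{IOZ} to get the $L^1$-quasidisk property from rectifiability, but its example is the inward cuspidal domain with the exponential cusp $\rho(x)=e/\exp(1/x)$, and the failure of every Sobolev $(p,q)$-extension property is then read off from the Maz'ya--Poborchi results quoted as Proposition \ref{prop:extcusp}, since the exponential cusp is sharper than $t^s$ for every $s$. You instead build a hairpin-comb domain (thin folded fingers of length $2\lambda_k=2^{1-k}$ and width $w_k=2^{-k^2}$ accumulating at a boundary point) and prove non-extendability by hand: the cutoff-plus-H\"older reduction to the single target exponent $q=1$ is a clean and correct first move, your norm computations $\|u_k\|_{W^{1,p}(\Omega)}\lesssim w_k^{1/p}\lambda_k^{1/p-1}$ check out, and the lower bound $\|\nabla v_k\|_{L^1(\mathbb{R}^2)}\geq \lambda_k/2$ via the ACL property and Fubini across the slit is the standard and correct way to handle the trace-matching issue you flag (it helps that $u_k$ is constant in the transversal direction inside each straight arm, so the a.e.-line absolutely continuous representative of $v_k$ attains the matched values). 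What each approach buys: the paper's proof is essentially free once the cuspidal extension theory is available, though strictly speaking Proposition \ref{prop:extcusp} as stated covers only polynomial cusps (and even asserts that all polynomial inward cusps \emph{are} $(1,1)$-extension domains), so ruling out every pair, including $(1,1)$, for the exponential cusp requires the general-cusp versions of the Maz'ya--Poborchi theorems rather than a literal application of the stated proposition; your construction is self-contained modulo \cite{IOZ}, elementary, and gives an explicit sequence of test functions defeating every $p<\infty$ simultaneously. The price you pay is the geometric bookkeeping --- pairwise disjoint fingers, the slit of each fold connecting to the unbounded complement so that $\Omega$ stays a Jordan domain, and rectifiability of the resulting boundary curve --- which, as you say, is routine for your choice of parameters, but should be written out if this were submitted as a complete proof.
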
 
The main result in \cite{IOZ} tells us that for a fixed $1<p<\fz$, polynomial cuspidal domains with a relatively not very sharp singularities are $L^p$-quasidisks. Hence, for $1<p<\fz$, we can not hope every $L^p$-quasidisk is a Sobolev $(k, k)$-extension domain for $1\leq\ k<\fz$, see \cite{Mazya1, Mazya2, Mazya3, MP}. The following theorem will imply the Sobolev extension property of $L^p$-quasidisks which is actually a special case of the result in the theorem.
\begin{thm}\label{thm:extension}
Let $\boz\subset\rr^2$ be a bounded simply connected domain and $0<R<\fz$ be a large enough constant with $\overline\boz\subset B(0, R)$. If there exists a homeomorphism of finite distortion $h:\rr^2\onto\rr^2$ with $h(\boz)=\dd$ and $K_h\in L^p(\boz)\cap L^q(B(0, R)\setminus\overline\boz)$ for $1<p, q\leq\fz$. Then $\boz$ is a Sobolev $\lf(\frac{2p}{p-1}, \frac{2q}{q+1}\r)$-extension domain and $\rr^2\setminus\overline\boz$ is a Sobolev $\lf(\frac{2q}{q-1}, \frac{2p}{p+1}\r)$-extension domain. 
\end{thm}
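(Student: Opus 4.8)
The plan is to transplant the problem to the model disk $\dd$, where the exponent $2$ is the conformally natural one, and to let the integrability of the distortion produce the precise loss in the exponents. Writing $f=h^{-1}$, the scheme is: pull a function on $\boz$ back to $\dd$ by $f$; extend it across $\partial\dd$ using that $\dd$ is a $W^{1,2}$-extension domain; and push the extension back by $h$, controlling the exterior piece with $K_h\in L^q$. The only analytic inputs are the pointwise distortion inequality $|Dh|^2\le K_hJ_h$, the planar identities $|(Dh)^{-1}|=|Dh|/J_h$ and $K_f=K_h\circ f$ (the distortion is inverse-invariant in the plane), and the area formula $\int_{\dd}g\,dy=\int_{\boz}(g\circ h)\,J_h\,dx$; the exponents $\tfrac{2p}{p-1}$ and $\tfrac{2q}{q+1}$ are then forced by Hölder against $|Du|^2$ and against $K_h$.

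First I would reduce to smooth $u$ by the Meyers--Serrin theorem and set $v=u\circ f$ on $\dd$. For the gradient, the chain rule together with the change of variables $x=f(y)$ gives
\[\int_{\dd}|Dv|^2\,dy=\int_{\boz}|Du|^2\,\big|(Dh)^{-1}\big|^2\,J_h\,dx=\int_{\boz}|Du|^2\,\frac{|Dh|^2}{J_h}\,dx\le\int_{\boz}|Du|^2\,K_h\,dx,\]
the last step being exactly the distortion inequality. At this \emph{critical} exponent the Jacobian cancels, and a single application of Hölder with the conjugate pair $\big(\tfrac{p}{p-1},p\big)$ yields
\[\int_{\boz}|Du|^2K_h\,dx\le\|Du\|_{L^{2p/(p-1)}(\boz)}^2\,\|K_h\|_{L^p(\boz)},\]
since $\tfrac{2}{2p/(p-1)}+\tfrac1p=1$. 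Hence $v\in W^{1,2}(\dd)$ with $\|v\|_{W^{1,2}(\dd)}\lesssim\|u\|_{W^{1,2p/(p-1)}(\boz)}$, the $L^2$ bound on $v$ itself coming from boundedness of $\boz$ and $\dd$. I then extend $v$ to $\tilde v\in W^{1,2}(\rr^2)$, supported in a fixed ball $B(0,2)$, by the standard reflection extension for the smooth domain $\dd$.

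Next I set $E(u)=\tilde v\circ h$ on $\rr^2$. Since $\tilde v|_{\dd}=v$ and $h(\boz)=\dd$, one has $E(u)|_{\boz}=v\circ h=u$, and $E(u)$ is supported in $h^{-1}(B(0,2))\subset B(0,R)$ for $R$ large. On the exterior the same change of variables, now run through $h$, gives
\[\int_{\rr^2\setminus\overline{\boz}}|DE(u)|^{t}\,dx\le\int_{B(0,2)\setminus\overline{\dd}}|D\tilde v|^{t}\,K_f^{t/2}\,J_f^{\,1-t/2}\,dy,\qquad t=\tfrac{2q}{q+1},\]
using $|Df|^t\le K_f^{t/2}J_f^{t/2}$ and $K_f=K_h\circ f$. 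The value $t=\tfrac{2q}{q+1}$ is precisely the one for which $1-\tfrac t2=\tfrac t{2q}$, so that $K_f^{t/2}J_f^{\,1-t/2}=(K_f^qJ_f)^{t/(2q)}$, and Hölder with the pair $\big(\tfrac2t,\tfrac{2q}t\big)$ gives
\[\int_{B(0,2)\setminus\overline{\dd}}|D\tilde v|^{t}(K_f^qJ_f)^{t/(2q)}\,dy\le\|D\tilde v\|_{L^2}^{t}\Big(\int_{B(0,R)\setminus\overline{\boz}}K_h^q\,dx\Big)^{t/(2q)},\]
after converting $\int K_f^qJ_f\,dy$ back to $\int K_h^q\,dx$. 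Combining the three steps yields $\|E(u)\|_{W^{1,t}(\rr^2)}\lesssim\|u\|_{W^{1,2p/(p-1)}(\boz)}$, so $\boz$ is a $\big(\tfrac{2p}{p-1},\tfrac{2q}{q+1}\big)$-extension domain. The statement for $\rr^2\setminus\overline{\boz}$ follows by the mirror-image argument: pull exterior data back to $\rr^2\setminus\overline{\dd}$ (there $K_h\in L^q$ controls the pull-back), extend \emph{into} the disk, and push back into the hole $\boz$, where $K_h\in L^p$ now produces the exponent $\tfrac{2p}{p+1}$ — the roles of $p$ and $q$ being interchanged.

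The main obstacle is entirely in making the transplantation rigorous for a mere homeomorphism of finite distortion rather than a smooth map. Concretely, I must justify: that $h$ and $f=h^{-1}$ are differentiable a.e.\ (Gehring--Lehto in the plane) and lie in the Sobolev classes needed for the chain rule; that both maps satisfy Lusin's condition $(N)$, so that the area formula above is a genuine identity and, in particular, $|\partial\boz|=0$, whence the glued function $E(u)$ carries no singular part along $\partial\boz$; and that the composition operator $u\mapsto u\circ f$ is closable, so that the estimate proved for smooth $u$ passes to all of $W^{1,2p/(p-1)}(\boz)$ and its limit is identified with $u\circ f$. These are exactly the points at which the integrability $K_h\in L^p\cap L^q$ enters qualitatively, through the finite-distortion theory for planar homeomorphisms and their inverses; verifying them carefully — together with the routine $L^t$ (as opposed to gradient) bounds and the gluing across $\partial\boz$ — is the technical heart of the argument.
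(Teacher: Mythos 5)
Your proposal follows the same three-step transplantation scheme as the paper's proof: pull $u$ back to the disk by $h^{-1}$, where H\"older against $K_h\in L^p(\boz)$ (via the planar identities $|(Dh)^{-1}|=|Dh|/J_h$ and $K_{h^{-1}}=K_h\circ h^{-1}$) lands you in $\dot W^{1,2}(\dd)$; extend across $\partial\dd$ at the conformally invariant exponent $2$; push forward by $h$, where H\"older against $K_h\in L^q$ produces exactly the exponent $\frac{2q}{q+1}$; and mirror the roles of $p$ and $q$ for $\rr^2\setminus\overline\boz$. The exponent bookkeeping is identical (the paper applies H\"older before the change of variables, you after --- equivalent), your $E(u)=\tilde v\circ h$ is the paper's reflection construction $\widetilde{\mathcal R}=h^{-1}\circ\mathcal R\circ h$ in the special case where $\tilde v$ is the inversion reflection of $v$, and your list of measure-theoretic caveats (a.e.\ differentiability, the area formula --- of which only the one-sided inequality, valid without Lusin $(N)$, is actually needed --- $|\partial\boz|=0$, and passage from smooth to general $u$) matches the points the paper settles by citations to Hencl--Koskela.

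One claim as written does not hold, and it is worth flagging because the paper's structure exists precisely to avoid it: the $L^2(\dd)$ bound on $v=u\circ h^{-1}$ does \emph{not} come from boundedness of $\boz$ and $\dd$. Change of variables gives $\int_\dd|v|^2\,dy=\int_\boz|u|^2J_h\,dx$, and no integrability of $J_h$ beyond $L^1_{\rm loc}$ is available, so $\|v\|_{L^2(\dd)}$ is not controlled by $\|u\|_{W^{1,2p/(p-1)}(\boz)}$ this way; a Poincar\'e repair on the disk only bounds $\|v-v_\dd\|_{L^2(\dd)}$ by $\|Dv\|_{L^2(\dd)}$ and leaves the average $v_\dd$ uncontrolled, since $h$ may distort measure wildly. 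The same difficulty recurs in your deferred ``routine $L^t$ bounds'' for $E(u)$ on the exterior. The paper sidesteps this entirely: it proves only homogeneous (gradient) estimates throughout, and recovers the zero-order terms afterwards by Lemma \ref{le:HSEtoSE}, that is, by the Herron--Koskela Poincar\'e inequality on a large ball with the average taken over the subdomain $\boz$, where the norm of the datum $u$ actually lives, so that the constant $u_\boz$ is trivially controlled by $\|u\|_{L^{2p/(p-1)}(\boz)}$. With that standard repair (and a separate remark for the quasiconformal endpoints $p=\fz$ or $q=\fz$, which the paper treats explicitly), your proposal coincides with the paper's proof.
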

Inward cuspidal domains with polynomial-type singularity will shows us the sharpness of this result. Also, the combination of this result with the Sobolev extension property of polynomial inward cuspidal domains due to Maz'ya and Poborchi \cite{Mazya1, Mazya2, Mazya3, MP} will give a new and simpler proof to the necessary part of the main result in \cite{IOZ}.  By a result due to Hencl and Koskela in \cite{HKarma}, for a homeomorphism of finite distortion $h:\rr^2\onto\rr^2$ with $\exp\lf(\lambda K_h\r)\in L^1_{\rm loc}(\rr^2)$ for some large enough positive constant $\lambda$, we have $K_{h^{-1}}\in L^p_{\rm loc}(\rr^2)$ for $1<p<c_2\lambda$ with a constant $c_2<2$ independent of $p$. Hence, as an application of Theorem \ref{thm:extension}, we have the following result towards to the second problem above. 
\begin{thm}\label{thm:exp}
Let $h:\rr^2\onto\rr^2$ be a homeomorphism of finite distortion with $\exp\lf(\lambda K_h\r)\in L^1_{\rm loc}(\rr^2)$ for some $\lambda>0$ large enough. Then $\boz:=h(\dd)$ is a Sobolev $\lf(\frac{2p_\lambda}{p_\lambda-1}, \frac{2p_\lambda}{p_\lambda+1}\r)$-extension domain for some $1<p_\lambda<\fz$. 
\end{thm}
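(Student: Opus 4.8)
The plan is to apply Theorem \ref{thm:extension} to $\boz = h(\dd)$ by using the inverse homeomorphism $h^{-1}$ as the map realizing $\boz$ as a finite-distortion preimage of $\dd$. First I would record the elementary geometric facts. Since $\dd$ has compact closure and $h$ is a homeomorphism of $\rr^2$, the image $\boz = h(\dd)$ is a bounded, simply connected domain with $\overline{\boz} = h(\overline{\dd})$ compact; hence there is a finite $R$ with $\overline{\boz} \subset B(0, R)$. Moreover $h^{-1} \colon \rr^2 \onto \rr^2$ is a homeomorphism satisfying $h^{-1}(\boz) = \dd$, so it is exactly the candidate to feed into Theorem \ref{thm:extension} with the two domains in the appropriate roles.

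Second, I would verify that $g := h^{-1}$ is itself a homeomorphism of finite distortion and control its distortion function; this is where the hypothesis $\exp\lf(\lambda K_h\r) \in L^1_{\rm loc}(\rr^2)$ enters. By the result of Hencl and Koskela \cite{HKarma} quoted in the introduction, for $\lambda$ large enough the inverse $h^{-1}$ is a homeomorphism of finite distortion whose distortion obeys $K_{h^{-1}} \in L^p_{\rm loc}(\rr^2)$ for every $1 < p < c_2\lambda$, with $c_2 < 2$ an absolute constant. Fixing such a $\lambda$, large enough that in addition $c_2\lambda > 1$ so the admissible range is nonempty, I would select any exponent $p_\lambda$ with $1 < p_\lambda < c_2\lambda < \fz$.

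Third, I would upgrade local integrability to integrability on the relevant bounded sets. Because $\overline{B(0,R)}$ is compact and $K_{h^{-1}} \in L^{p_\lambda}_{\rm loc}(\rr^2)$, we have $K_{h^{-1}} \in L^{p_\lambda}(B(0,R))$, and in particular $K_{h^{-1}} \in L^{p_\lambda}(\boz) \cap L^{p_\lambda}(B(0,R)\setminus\overline{\boz})$. Thus $g = h^{-1}$ satisfies the hypotheses of Theorem \ref{thm:extension} with the common exponent choice $p = q = p_\lambda$, and invoking that theorem gives at once that $\boz$ is a Sobolev $\lf(\frac{2p_\lambda}{p_\lambda - 1}, \frac{2p_\lambda}{p_\lambda + 1}\r)$-extension domain, which is the assertion.

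The only genuinely nontrivial input is the second step, namely that inversion preserves the class of homeomorphisms of finite distortion while producing a distortion function in $L^p_{\rm loc}$; but this is precisely the content we are entitled to borrow from \cite{HKarma}, so within the scope of this paper the argument is a clean composition: invert the map, read off the inverse-distortion regularity from the exponential integrability of $K_h$, and feed the result into Theorem \ref{thm:extension}. The requirement that $\lambda$ be large enough plays a double role here, being needed both for the Hencl--Koskela inverse estimate and to ensure $c_2\lambda > 1$ so that a valid exponent $p_\lambda > 1$ exists.
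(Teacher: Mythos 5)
Your proposal is correct and follows essentially the same route as the paper: invoke the Hencl--Koskela result from \cite{HKarma} to get $K_{h^{-1}}\in L^{p_\lambda}_{\rm loc}(\rr^2)$ with $p_\lambda>1$ for $\lambda$ large enough, then apply Theorem \ref{thm:extension} to $g=h^{-1}$ (which satisfies $g(\boz)=\dd$) with $p=q=p_\lambda$. If anything, your write-up is slightly more careful than the paper's, which writes $\boz:=h^{-1}(\dd)$ in the proof despite the statement's $\boz:=h(\dd)$ — your explicit identification of $h^{-1}$ as the map fed into Theorem \ref{thm:extension}, together with the upgrade from $L^{p_\lambda}_{\rm loc}$ to $L^{p_\lambda}$ on the bounded sets $\boz$ and $B(0,R)\setminus\overline{\boz}$, makes the intended argument unambiguous.
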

However, we have not obtained the best Sobolev extension property for all bounded simply connected domains which are images of the unit disk under global homeomorphisms of finite distortion with locally exponentially integrable distortion function.

\section{Preliminarily}
The notation $\boz$ always means a domain in the Euclidean plane $\rr^2$. $\widehat{\rr^2}:=\rr^2\cup\{\fz\}$ is the one-point compactification of the plane $\rr^2$. $B(z, r)$ is a disk with the center $z\in\rr^2$ and radius $0<r<\fz$. $\dd:=B(0, 1)$ means the unit disk in $\rr^2$. For $0<r<R<\fz$, we denote $A(r, R):=B(0, R)\setminus\overline{B(0, r)}$ to be an annulus. Typically, $C$ will be a constant that depend on various parameters and may differ even on the same line of inequalities. For a measurable subset $A\subset\boz$ with $0<|A|<\fz$ and a function $u\in L^1_{\rm loc}(\boz)$, $u_A$ is the integral average defined by setting
\[u_A:=\bint_{A}u(z)dz=\frac{1}{|A|}\int_{A}u(z)dz.\]

Let us give the definition of Sobolev spaces first.
\begin{definition}\label{de:sobolev}
Let $1\leq p\leq\fz$ and $\boz\subset\rr^2$ be a domain. we say a function $u\in L^1_{\rm loc}(\boz)$ belongs to the homogeneous Sobolev space $\dot W^{1, p}(\boz)$ if it is weakly differentiable and its weak derivative satisfies $|\nabla u|\in L^p(\boz)$. The homogeneous Sobolev space $\dot W^{1, p}(\boz)$ is equipped with the semi-norm 
\[\|u\|_{\dot W^{1, p}(\boz)}:=\lf(\int_\boz|\nabla u(z)|^pdz\r)^{\frac{1}{p}}.\] 
If $u\in L^p(\boz)$ at the same time, we say $u$ is contained in the Sobolev space $W^{1, p}(\boz)$. The Sobolev space $W^{1, p}(\boz)$ is equipped with the norm 
\[\|u\|_{W^{1,p}(\boz)}:=\lf(\int_\boz|u(z)|^p+|\nabla u(z)|^pdz\r)^{\frac{1}{p}}.\]
 \end{definition}
Next, we define Sobolev extension domains and homogeneous Sobolev extension domains.
\begin{defn}\label{de:exten}
Let $1\leq q\leq p\leq\fz$. A bounded domain $\boz\subset\rr^2$ is said to be a Sobolev $(p, q)$-extension domain, if for every $u\in W^{1, p}(\boz)$ there exists an extension function $E(u)\in W^{1, q}(\rr^2)$ with $E(u)\big|_\boz\equiv u$ and $$\|E(u)\|_{W^{1,q}(\rr^2)}\leq C\|u\|_{W^{1, p}(\boz)}$$
 for a constant $C$ independent of $u$. Simply replace $W^{1, p}(\boz)$ by $\dot W^{1, p}(\boz)$ and replace $W^{1, q}(\rr^2)$ by $\dot W^{1, q}(\rr^2)$ in above, we get the definition of homogeneous Sobolev $(p, q)$-extension domains.
\end{defn} 
%The following lemma will be used later. It can be directly obtained from Theorem $1$ in \cite[Section 1.1.6]{Mazya:book2}.
%\begin{lemma}\label{le:density}
%For $1\leq p<\fz$, $C^\fz(\rr^2)\cap\dot W^{1, p}_b(\hh)$ is dense in $\dot W^{1, p}(\hh)$ in the following sense. For every function $u\in\dot W^{1, p}_b(\hh)$, every bonded open set $U\subset\hh$ and every $\epsilon>0$, there exists a sequence of functions $\{u_k\}_{k=1}^\fz\subset C^\fz(\rr^2)\cap\dot W^{1, p}_b(\hh)$ with $$\lim_{k\to\fz}\|u_k-u\|_{W^{1, p}(U)}=0$$ and $u_k$ converges to $u$ on almost every $z\in\hh$.
%\end{lemma}

%\begin{defn}\label{de:reflec}
%Let $\boz\subset\rr^2$ be an unbounded domain. A self-homeomorphism $\mathcal R:\rr^2\to\rr^2$ is called a reflection over $\partial\boz$, if $\mathcal R(\rr^2\setminus\overline\boz)=\boz$, $\mathcal R(\boz)=\rr^2\setminus\overline\boz$ and for every $z\in\partial\boz$, $\mathcal R(z)=z$. 
%\end{defn}

In \cite{HKjam}, authors proved that a bounded domain is homogeneous Sobolev $(p, p)$-extension if and only of it is a Sobolev $(p, p)$-extension for all $1\leq p<\fz$. Their argument at least also implies that a bounded homogeneous Sobolev $(p, q)$-extension domain is also a Sobolev $(p, q)$-extension domain for $1\leq q\leq p<\fz$. For the convenience of readers, we represent this observation here.
\begin{lem}\label{le:HSEtoSE}
Let $1\leq q\leq p<\fz$. A bounded homogeneous Sobolev $(p,q)$-extension domain is also a Sobolev $(p, q)$-extension domain.
\end{lem}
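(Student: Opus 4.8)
The plan is to manufacture a genuine $W^{1,q}$-extension out of the given homogeneous one by first subtracting the mean of $u$ over $\boz$, applying the homogeneous extension to the resulting mean-zero function, and then multiplying by a fixed cut-off supported in a large ball. Fix $R<\fz$ with $\overline\boz\subset B(0,R)$ and choose $\eta\in C_c^\infty(\rr^2)$ with $\eta\equiv 1$ on $B(0,R)$, $\supp\eta\subset B(0,2R)$, and $|\nabla\eta|\le C/R$. Given $u\in W^{1,p}(\boz)$, let $u_\boz=\bint_\boz u$ be its integral average. Since $\nabla(u-u_\boz)=\nabla u\in L^p(\boz)$, the function $u-u_\boz$ lies in $\dot W^{1,p}(\boz)$, so the homogeneous $(p,q)$-extension hypothesis provides a homogeneous extension $v\in\dot W^{1,q}(\rr^2)$ of $u-u_\boz$ with $v|_\boz=u-u_\boz$ and $\|\nabla v\|_{L^q(\rr^2)}\le C\|\nabla u\|_{L^p(\boz)}$. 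I then set $E(u):=\eta\,(v+u_\boz)$. On $\boz$ one has $\eta\equiv 1$, hence $E(u)|_\boz=(u-u_\boz)+u_\boz=u$, so this is indeed an extension, and it is compactly supported, so only bounds on the fixed ball $B:=B(0,2R)$ are needed.

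The core of the argument is to control $\|v\|_{L^q(B)}$, because the hypothesis controls only the gradient of $v$ and nothing about its size. Here I would exploit that $v$ has vanishing average over $\boz$, since $\bint_\boz v=\bint_\boz(u-u_\boz)=0$. The $(q,q)$-Poincar\'e inequality on the ball $B$ gives $\|v-v_B\|_{L^q(B)}\le C\|\nabla v\|_{L^q(B)}\le C\|\nabla u\|_{L^p(\boz)}$, and then the additive constant $v_B$ is pinned down by comparing it to the vanishing average over $\boz\subset B$:
\[
|v_B|=|v_B-v_\boz|\le\frac{1}{|\boz|}\int_\boz|v-v_B|\le|\boz|^{-1/q}\|v-v_B\|_{L^q(\boz)}\le C\|\nabla u\|_{L^p(\boz)} .
\]
Combining the two estimates bounds $\|v\|_{L^q(B)}$ by $C\|\nabla u\|_{L^p(\boz)}$. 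Together with the Hölder bound $|u_\boz|\le|\boz|^{-1/p}\|u\|_{L^p(\boz)}$, this controls $\|\eta(v+u_\boz)\|_{L^q(\rr^2)}$, while the product rule $\nabla E(u)=\eta\nabla v+(v+u_\boz)\nabla\eta$ with $|\nabla\eta|\le C/R$ controls $\|\nabla E(u)\|_{L^q(\rr^2)}$. Summing yields $\|E(u)\|_{W^{1,q}(\rr^2)}\le C\|u\|_{W^{1,p}(\boz)}$, as required; here $C$ is allowed to depend on $\boz$ (through $R$ and the Poincar\'e constant of $B$), which is permitted.

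The only genuinely nontrivial step is the passage from a gradient bound to an $L^q$-bound on the fixed ball: the homogeneous hypothesis says nothing about the magnitude of $v$, only about $\nabla v$, so one must use the normalization $v_\boz=0$ together with the Poincar\'e inequality to eliminate the free additive constant. The boundedness of $\boz$ is exactly what makes this transfer of averages quantitative, and it is also what lets the single cut-off $\eta$ localize the extension to a bounded region. Everything else is routine product-rule and Hölder bookkeeping, and the restriction $1\le q\le p<\fz$ is inherited directly from the homogeneous extension hypothesis.
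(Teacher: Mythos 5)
Your proof is correct and follows essentially the same route as the paper: both arguments pin down the free additive constant of the homogeneous extension by comparing with the integral average over $\boz$ via a Poincar\'e-type inequality on a fixed large ball (the paper cites \cite[Lemmas 4.1 and 4.2]{HKjam} for exactly the transfer-of-averages estimate you derive by hand), and then absorb $|u_\boz|$ into $\|u\|_{L^p(\boz)}$ by H\"older. Your mean-subtraction and explicit cutoff $\eta$ are cosmetic rearrangements of the same idea; if anything, the cutoff makes explicit the final passage from an estimate on a bounded ball to a genuine extension in $W^{1,q}(\rr^2)$, a step the paper's proof leaves implicit after its bound on $B$.
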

\begin{proof}
Let $\boz\subset\rr^2$ be a bounded homogeneous Sobolev $(p, q)$-extension domain for $1\leq q\leq p<\fz$. And let $B\subset\rr^2$ be a large enough disk with $\boz\subset\subset B$. Fix $u\in W^{1,p}(\boz)\subset\dot W^{1,p}(\boz)$. Since $\boz$ is a homogeneous Sobolev $(p,q)$-extension domain, we have an extension function $E(u)\in\dot W^{1,q}(\rr^2)$ with $E(u)\big|_\boz\equiv u$ and $$\|\nabla E(u)\|_{L^q(\rr^2)}\leq C\|\nabla u\|_{L^P(\boz)}$$ for a constant $C$ independent of $u$. By \cite[Lemma 4.2]{HKjam}, we have $E(u)\big|_B\in W^{1, q}(B)$. Then, Lemma $4.1$ in \cite{HKjam} implies
\begin{multline}\label{eq:poin}
\lf(\int_B|E(u)(z)-u_\boz|^qdz\r)^{\frac{1}{q}}\\
\leq 2^n\diam(B)\lf(\frac{|B|}{|\boz|}\r)^{\frac{1}{q}}\lf(\int_B|\nabla E(u)(z)|^qdz\r)^{\frac{1}{q}}\\
\leq C\lf(\int_\boz|\nabla u(z)|^pdz\r)^{\frac{1}{p}}.
\end{multline}
By (\ref{eq:poin}), the triangle inequality and the H\"older inequality, we obtain the desired inequality that
\begin{multline}
\lf(\int_B\abs{E(u)(z)}^q+\abs{\nabla E(u)(z)}^qdz\r)^{\frac{1}{q}}\\
\leq C\lf(\int_\boz\abs{\nabla u(z)}^p\r)^{\frac{1}{p}}+C\lf(\frac{|B|}{|\boz|}\r)^{\frac{1}{q}}\lf(\int_\boz\abs{u(z)}^qdz\r)^{\frac{1}{q}}\\
\leq C\lf(\int_\boz\abs{u(z)}^pdz+\abs{\nabla u(z)}^pdz\r)^{\frac{1}{p}}.
\end{multline}
\end{proof}

Let us define homeomorphisms of finite distortion and $L^p$-quasidisks.
\begin{defn}\label{de:HFD}
We say a homeomorphism $h:\boz\onto h(\boz)$ has finite distortion if $h\in W^{1,1}_{\rm loc}(\boz, \rr^2)$ and there is a measurable function $K:\boz\to[1, \fz]$ with $K(z)<\fz$ almost everywhere such that 
\[|Dh(z)|^2\leq K(x)J_h(z)\ {\rm for\ almost\ all}\ z\in\boz.\]
\end{defn}
For a homeomorphism of finite distortion, we define the optimal distortion function by setting 
\begin{equation}\label{eq:optimal}
K_h(z):=\begin{cases}
\frac{|Dh(z)|^2}{J_h(z)}, &\ {\rm for}\ z\in\{J_h>0\},\\
1, &\ {\rm for}\ z\in\{J_h=0\}.
\end{cases}
\end{equation}
\begin{defn}\label{de:quasidisk}
Let $1\leq p\leq\fz$. An bounded simply connected domain $\boz\subset\rr^2$ is said to be a $L^p$-quasidisk, if there exists a global homeomorphism of finite distortion $h:\rr^2\onto\rr^2$ with $h(\boz)=\dd$ and $K_h\in L^p_{\rm loc}(\rr^2)$.
\end{defn}
Let $\boz\subset\rr^2$ be a bounded simply connected domain. A self homeomorphism $h:\rr^2\onto\rr^2$ is called a reflection over the boundary $\partial\boz$ if $h(\boz)=\widehat{\rr^2}\setminus\overline\boz$ and $h(z)=z$ for every $z\in\partial\boz$. Base on the homeomorphism of finite distortion in Theorem \ref{thm:extension}, we can construct a reflection $\mathcal R_h:\rr^2\onto\rr^2$ over $\partial\boz$ which can induce a bounded linear extension operator from $\dot W^{1, \frac{2p}{p+1}}(\boz)$ to $\dot W^{1, \frac{2q}{q-1}}(\rr^2)$ and a bounded linear extension operator from $\dot W^{1, \frac{2q}{q+1}}(\rr^2\setminus\overline\boz)$ to $\dot W^{1, \frac{2p}{p-1}}(\rr^2)$. In general, we say a reflection $\mathcal R:\widehat{\rr^2}\onto\widehat{\rr^2}$ over $\partial\boz$ can induce a bounded linear Sobolev $(p, q)$-extension operator for $\boz$ with $1\leq q\leq p<\fz$, if there exists a bounded Lipschitz domain $U$ containing $\partial\boz$  such that, for every function $u\in\dot W^{1, p}(\boz)$, the function $v$ defined by setting $v=u$ on $U\cap\boz$ and $v=u\circ\mathcal R$ on $U\setminus\overline{\boz}$ has a representative which belongs to the Sobolev space $\dot W^{1, q}(U)$ and we have
\begin{equation}\label{uu1}
\|v\|_{\dot W^{1, q}(U)}\leq C\|u\|_{\dot W^{1,p}(\boz)}
\end{equation}
with a positive constant $C$ independent of $u$. Similarly, we say the reflection $\mathcal R$ induces a bounded linear Sobolev $(p, q)$-extension operator for $\rr^2\setminus\overline\boz$ with $1\leq q\leq p<\fz$, if for every $u\in\dot W^{1, p}(\rr^2\setminus\overline\boz)$, the function $\tilde v$ defined by setting $\tilde v=u$ on $U\setminus\overline\boz$ and $\tilde v=u\circ\mathcal R$ on $U\cap\boz$ has a representative which belongs to the Sobolev space $\dot W^{1, q}(U)$ and we have 
\[\|\tilde v\|_{\dot W^{1, q}(U)}\leq C\|u\|_{\dot W^{1, p}(\rr^2\setminus\overline\boz)}\]
with a positive constant $C$ independent of $u$. By using a suitable cut-off function, it is easy to see $\boz$ or $\rr^2\setminus\overline\boz$ is a homogeneous Sobolev $(p, q)$-extension domain. Here the introduction of the bounded open set $U$ is a convenient way to overcome the non-essential difficulty that functions in $\dot W^{1,p}(G)$ do not necessarily belong to $\dot W^{1,q}(G)$ when $1\leq q<p<\fz$ and $G$ has infinite volume. The following technical lemma justifies our terminology.
\begin{prop}\label{cut-off}
Let $\boz\subset\rr^2$ be a bounded domain with $|\partial\boz|=0$ and $\mathcal R:\widehat{\rr^2}\to\widehat{\rr^2}$ be a reflection over $\partial\boz$. If $\mathcal R$ induces a bounded linear extension operator from $\dot W^{1,p}(\boz)$ to $\dot W^{1,q}(\rr^2)$ in the sense of (\ref{uu1}) $($from $\dot W^{1,p}(\rr^2\setminus\overline\boz)$ to $\dot W^{1,q}(\rr^2)$, respectively$)$ for $1\leq q\leq p<\fz$, then $\boz$ ($\rr^2\setminus\overline\boz$, respectively) is a homogeneous Sobolev $(p, q)$-extension domain with a linear extension operator. 
\end{prop}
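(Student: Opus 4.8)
The plan is to promote the $U$-localized extension provided by the hypothesis to a genuine extension on all of $\rr^2$ by a single smooth cut-off, paying for the transition to a constant with a Poincaré inequality. I describe the case of $\boz$ in detail; the complementary case of $\rr^2\setminus\overline\boz$ is symmetric, with the cut-off carried out inside the \emph{bounded} piece $\boz$ rather than in the exterior. First I would fix the geometry: since $\partial\boz$ is compact and $U$ is an open Lipschitz neighbourhood of it, I can interpose a bounded Lipschitz domain $U_0$ with $\partial\boz\subset U_0$ and $\overline{U_0}\subset U$, chosen so that the exterior shell $S:=(U\setminus\overline{U_0})\setminus\overline\boz$ is a bounded Lipschitz (in particular John) domain, hence supports the $(q,q)$-Poincaré inequality. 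Fix $\eta\in C_c^\infty(U)$ with $0\le\eta\le1$, $\eta\equiv1$ on $U_0$ and $|\nabla\eta|\le C$. Given $u\in\dot W^{1,p}(\boz)$, let $v\in\dot W^{1,q}(U)$ be the function supplied by the hypothesis, so that $v=u$ on $U\cap\boz$, $v=u\circ\mathcal R$ on $U\setminus\overline\boz$, and $\|\nabla v\|_{L^q(U)}\le C\|\nabla u\|_{L^p(\boz)}$; put $a:=v_S$, the integral average of $v$ over $S$. I then define
\[
E(u):=\begin{cases} u & \text{on }\boz,\\ \eta\,v+(1-\eta)\,a & \text{on }\rr^2\setminus\overline\boz.\end{cases}
\]
Because $\eta\equiv1$ on $U_0$, this $E(u)$ agrees with $v$ on the whole neighbourhood $U_0$ of $\partial\boz$, equals $u$ on $\boz$, and equals the constant $a$ outside $U$.

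The first substantive point is that $E(u)$ is genuinely weakly differentiable on $\rr^2$. On the open set $U_0$ one has $E(u)=v\in\dot W^{1,q}$; on the open set $\rr^2\setminus\overline\boz$ the formula $\eta v+(1-\eta)a$ is a smooth multiple of a Sobolev function plus a constant, hence Sobolev; and on $\boz$ it equals $u$. These open sets cover $\rr^2$ up to the null set $\partial\boz$ — this is where the assumption $|\partial\boz|=0$ is used — and the three local descriptions agree on overlaps, so the sheaf property of Sobolev functions gives $E(u)\in\dot W^{1,q}_{\rm loc}(\rr^2)$ with the gradient computed piecewise. The essential gluing across $\partial\boz$ is carried entirely by the hypothesis that $v$ itself lies in $\dot W^{1,q}(U)$ and so bridges the two sides; and plainly $E(u)\big|_\boz\equiv u$.

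It remains to bound the gradient. Outside $U$ the gradient vanishes. On $\boz$ we have $\nabla E(u)=\nabla u$, and since $\boz$ is bounded and $q\le p$, Hölder gives $\|\nabla u\|_{L^q(\boz)}\le|\boz|^{\frac1q-\frac1p}\|\nabla u\|_{L^p(\boz)}$. On the exterior the product rule gives $\nabla E(u)=\eta\,\nabla v+(v-a)\,\nabla\eta$, where the first term is dominated by $\|\nabla v\|_{L^q(U)}$ and the second is supported in $S$; for that term I invoke the Poincaré inequality on $S$ with the choice $a=v_S$, namely $\|v-a\|_{L^q(S)}\le C\|\nabla v\|_{L^q(S)}\le C\|\nabla v\|_{L^q(U)}$. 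Collecting these and using the reflection bound $\|\nabla v\|_{L^q(U)}\le C\|\nabla u\|_{L^p(\boz)}$ produces $\|\nabla E(u)\|_{L^q(\rr^2)}\le C\|\nabla u\|_{L^p(\boz)}$. Linearity of $E$ is immediate: $u\mapsto v$ is linear by hypothesis, $u\mapsto a=v_S$ is linear, and $\eta$ is fixed.

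The step I expect to demand the most care is not the computation but the conceptual handling of the exterior (unbounded) case. There the same recipe applies with $\eta$ cutting the reflected function $u\circ\mathcal R$ down to a constant inside the bounded set $\boz$, so that $u$ is left unchanged on the infinite-measure part $\rr^2\setminus\overline\boz$ while $E(u)$ becomes constant in the deep interior of $\boz$. All the quantitative work again takes place on the bounded set $U$, which is exactly why $U$ was introduced: it confines every estimate to a region of finite measure and thereby circumvents the failure of $\dot W^{1,p}(G)\subset\dot W^{1,q}(G)$ on an infinite-volume $G$ that would otherwise obstruct a naive global bound. The two technical items I would then write out in full are the admissibility of the shell $S$ as a Poincaré domain (so that its constant is finite) and the verification that the piecewise construction creates no spurious distributional mass along $\partial\boz$.
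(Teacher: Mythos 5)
Your bounded-case argument is correct, but it follows a genuinely different route from the paper's. The paper does not cut off by hand: it observes that $E_{\mathcal R}(u)$ has a representative in $\dot W^{1,q}(U)$ by hypothesis and then invokes \cite[Theorem 4.5]{HKjam}, which supplies a bounded linear extension operator $L\colon\dot W^{1,q}(U)\to\dot W^{1,q}(\rr^2)$ for the bounded Lipschitz domain $U$, and sets $\tilde E_{\mathcal R}(u)=L(E_{\mathcal R}(u))$. Your construction replaces that citation with an explicit formula --- truncate to a mean value and pay with a Poincar\'e inequality --- which is more elementary and self-contained, works uniformly down to $q=1$ without the paper's side remark that the cited argument ``also works for $q=1$'', and is in one respect more careful than the paper's own write-up: extending $E_{\mathcal R}(u)$ from $U$ alone does not by itself return $u$ on $\boz\setminus U$, whereas your gluing through $U_0$ keeps $u$ on all of $\boz$ by construction. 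What the paper's route buys is brevity and freedom from any auxiliary geometric choices.

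Two caveats. First, the interposition of a Lipschitz $U_0$ making the shell $S$ a Poincar\'e domain is asserted, not proved, and for a wild $\partial\boz$ it deserves justification (it is achievable, since $\partial\boz\subset U_0$ forces $\overline S\cap\partial\boz=\emptyset$ and hence $\partial S\subset\partial U\cup\partial U_0$, so a generic finite union of balls works); but you can delete the step entirely. Take any $\eta\in C_c^\infty(U)$ with $\eta\equiv 1$ on a neighbourhood of $\partial\boz$ and choose $a:=v_U$; since $(v-a)\nabla\eta$ is supported in $U$ and $U$ is a bounded connected Lipschitz domain, the $(q,q)$-Poincar\'e inequality on $U$ itself gives $\lVert v-v_U\rVert_{L^q(U)}\le C\lVert\nabla v\rVert_{L^q(U)}$, and no regularity of $U_0$ or $S$ is ever needed. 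Second, in the exterior case your formula keeps $E(u)=u$ on all of $\rr^2\setminus\overline\boz$, so $\nabla E(u)=\nabla u$ there, which for $q<p$ need not lie in $L^q$ of that infinite-measure set; consequently the global bound $\lVert\nabla E(u)\rVert_{L^q(\rr^2)}\le C\lVert\nabla u\rVert_{L^p(\rr^2\setminus\overline\boz)}$ cannot hold verbatim, and your claim that the recipe ``circumvents'' this failure overstates: it does not, for the literal global statement. The conclusion for $\rr^2\setminus\overline\boz$ must be read, as the paper de facto does in Theorem \ref{thm:out}, through the bounded localization $B(0,R)\setminus\overline\boz$, where the H\"older inequality converts the $L^p$ gradient bound into an $L^q$ one. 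The paper's own proof dismisses this case as ``analogous'', so the looseness is shared, but your write-up should state the localized interpretation explicitly rather than claim the difficulty is resolved.
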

\begin{proof}
We only consider the case of $\boz$, since the case of $\rr^2\setminus\overline\boz$ is analogous. Let $U\subset\rr^2$ be the corresponding Lipschitz domain which contains $\partial\boz$. For a given function $u\in\dot W^{1,p}(\boz)$, we define a function $E_{\mathcal R}(u)$ by setting
\begin{equation}
E_{\mathcal R}(u)(z):=\left\{\begin{array}{ll}\label{equa:E_r(u)}
u(\mathcal R(z)),&\ {\rm for}\ z\in U\setminus\overline{\boz},\\
0,&\ {\rm for}\ z\in\partial\boz,\\
u(z),&\ {\rm for}\ z\in \Omega.
\end{array}\right.
\end{equation}
Then $E_\mathcal R(u)$ has a representative that belongs to $\dot W^{1,q}(U)$ with 
$$\|E_\mathcal R(u)\|_{\dot W^{1,q}(U)}\leq C\|u\|_{\dot W^{1,p}(\boz)}.$$
%Let $\psi:\rr^2\to\rr$ be a Lipschitz function such that $\psi\big|_{\overline\boz}\equiv1$, $\psi\big|_{\rr^2\setminus U}\equiv 0$, $D\psi$ is continuous and $0\leq \psi(z)\leq 1$ for every $z\in\rr^2$.  %Since $\psi$ is Lipschitz with $0\leq \psi\leq 1$, $U$ is a bounded Lipschitz domain, $\tilde E_\mathcal R(u)$ has a representative that belongs to $\dot W^{1,p}(\rr^2)$, we have
%\begin{eqnarray}
%\int_{\rr^2}|\tilde E_\mathcal R(u)(z)|^{q}dz&\leq&\int_{\Omega}|u(z)|^{q}dz+\int_U|E_\mathcal R(u)(z)|^qdz\nonumber\\
 %                                                      &\leq&\lf(\int_{\boz}|u(z)|^pdz+\int_\boz|Du(z)|^pdz\r)^{\frac{q}{p}},\nonumber
%\end{eqnarray}
%and 
%\begin{eqnarray}
%\int_{\rr^2}|D\tilde E_{\mathcal R}(u)(z)|^{q}dz&\leq&C\int_{U}|E\mathcal R(u)(z)D\psi(z)|^qdz\nonumber\\
%& &+C\int_{U}|\psi(z) DE_\mathcal R(u)(z)|^qdz\nonumber\\
%                                         &\leq&C\int_U|DE_{\mathcal R}(u)(z)|^qdz\leq C\lf(\int_{\Omega}|Du(z)|^{p}dz\r)^{\frac{q}{p}}.\nonumber
%\end{eqnarray}
%Here, 
%By combining these two inequalities, we obtain that $\tilde E_\mathcal R(u)\in\dot W^{1,q}(\rr^2)$ with $\tilde E_\mathcal R(u)\big|_\boz\equiv u$ and 
%$$\|\tilde E_\mathcal R(u)\|_{\dot W^{1,q}(\rr^2)}\leq C\|u\|_{\dot W^{1,p}(\boz)}.$$
%Hence, we defined a bounded linear extension operator from $\dot W^{1,p}(\boz)$ to $\dot W^{1,q}(\rr^2)$ in (\ref{equa:exglo}).
Since $U\subset\rr^2$ is a bounded Lipschitz domain, by the result due to Herron and Koskela \cite[Theorem 4.5]{HKjam}, for every $1\leq q<\fz$, there exists a bounded linear extension operator $L:\dot{W}^{1, q}(U)\to\dot{W}^{1, q}(\rr^2)$. Although Herron and Koskela only discussed the case for $1<q<\fz$, however their argument also works for $q=1$. For every function $u\in\dot W^{1,p}(\boz)$, we define an extension function $\tilde E_{\mathcal R}(u)\in\dot{W}^{1, q}(\rr^2)$ by setting 
\begin{equation}\label{equa:exglo}
\tilde E_\mathcal R(u):=L(E_\mathcal R(u)). 
\end{equation}
Then, we have $\tilde E_{\mathcal R}(u)\big|_\boz\equiv u$ with 
\[\|\tilde E_\mathcal R(u)\|_{\dot W^{1, q}(\rr^2)}\leq C\|u\|_{\dot W^{1, p}(\boz)}\]
for a constant $C$ independent of $u$.
\end{proof}

A strictly increasing function $\rho:[0, \fz)\onto[0, \fz)$ of class $C^1(0, \fz)\cap C[0, \fz)$ is called a cuspidal function if $\rho(1)=1$, $\rho'$ is increasing in $(0, \fz)$ and
\[\lim_{x\to0^+}\rho'(x)=0.\]
 The corresponding inward and outward cuspidal domains are defined by setting 
\begin{equation}\label{eq:ineward}
\boz^i_\rho:=B(0, 1)\setminus\{(x, y)\in[0, 1]\times\rr:|y|<\rho(x)\}
\end{equation}
and 
\begin{equation}\label{eq:outward}
\boz^o_\rho:= B((2, 0), \sqrt 2)\cup\{(x, y)\in(0, 1]\times\rr: |y|<\rho(x)\}
\end{equation}
respectively. See the Figure below for the exemplary cuspidal function $\rho(x)=x^{\frac{4}{3}}$.
\begin{figure}[htbp]
\centering
\includegraphics[width=1.0\textwidth]
{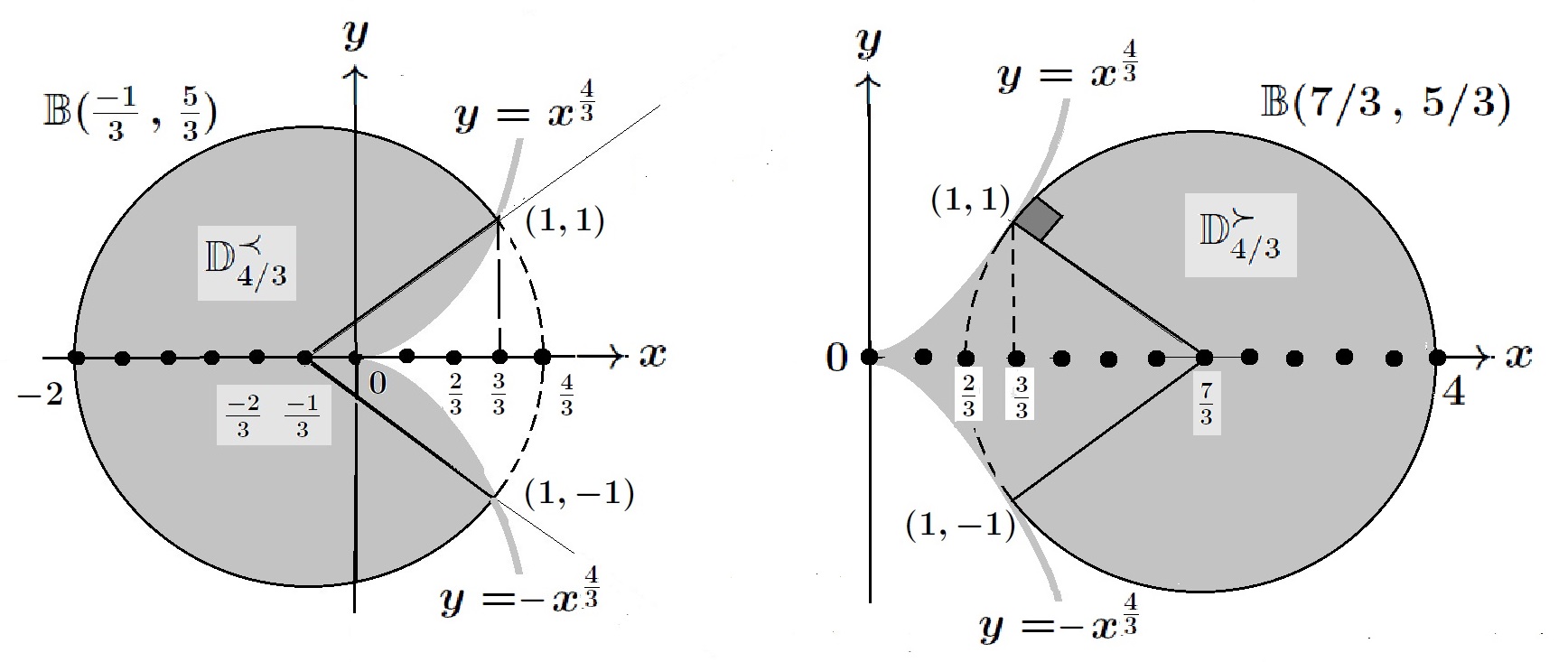}\label{cusp}
\caption{Inward and outward cuspidal domains}
\end{figure}
If the cuspidal function is $\rho(x)=x^s$ for some $1<s<\fz$, the corresponding inward and outward cuspidal domains are called polynomial inward and outward cuspidal domains with the degree $s$. The following result about Sobolev extension property of polynomial inward and outward cuspidal domains dues to Maz'ya and Poborchi \cite{Mazya1,Mazya2,Mazya3,MP}.
\begin{prop}\label{prop:extcusp}
The polynomial outward cuspidal domain $\boz^o_{t^s}$ is a Sobolev $(p, q)$-extension domain for $1\leq q<p<\fz$ if and only if $s<\frac{2p}{q}-1$. The polynomial inward cuspidal domain $\boz^i_{t^s}$ is a Sobolev $(p, q)$-extension domain for $1<q<p\leq\fz$ if and only if $s<\frac{pq+p-2q}{pq-p}$. And for every $1<s<\fz$, the polynomial inward cuspidal domain $\boz^i_{t^s}$ is a Sobolev $(1, 1)$-extension domain.
\end{prop}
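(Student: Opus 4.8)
The plan is to prove the two directions of each equivalence separately and, in every case, to localize the analysis to a neighborhood of the cusp tip at the origin, since away from the tip the boundaries of $\boz^o_{t^s}$ and $\boz^i_{t^s}$ are smooth and the extension is classical. The key structural fact I would exploit is the self-similarity of the profile $\rho(x)=x^s$ under the anisotropic scaling $(x,y)\mapsto(\lz x,\lz^s y)$: this lets me work dyadically, writing $A_j$ for the portion of the domain at scale $2^{-j}$, and reduces every estimate to a single one-dimensional \emph{weighted} inequality in the variable $x$, the weight being the power of $x$ dictated by the cusp width $x^s$.

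\textbf{Necessity (the ``only if'' directions).} Here I would exhibit explicit power-type test functions. For the outward cusp take $u(x,y)=\eta(x)x^{-a}$ on the spike, with $\eta$ a cut-off near the tip; using the area element $x^s\,dx$ one checks $u\in W^{1,p}(\boz^o_{t^s})$ precisely when $a<\frac{s+1-p}{p}$. On the other hand any extension $w\in\dot W^{1,q}(\rr^2)$ must agree with $u$ on the spike, and I would bound its energy from below by the $q$-capacity of the thin needle that the spike cuts out of $B(0,2^{-j})$. Since a segment has positive $q$-capacity for $q>1$ (and capacity $\sim$ its length for $q=1$), this capacity is $\sim (2^{-j})^{2-q}$ to leading order — independent of the needle width, with a logarithmic correction only at $q=2$ — and summing the dyadic contributions forces $a<\frac{2}{q}-1$ for any finite extension. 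Matching the two thresholds gives exactly $s=\frac{2p}{q}-1$. For the inward cusp the mechanism is the two-lobe pinch: I place $u$ in the upper lobe with boundary trace $g_+(x)=x^{-b}$ on $y=x^s$ and $u\equiv0$ on the lower lobe, letting $u$ decay into the lobe over the vertical distance $\dz$; pointwise minimization of $g_+^p(\dz^{1-p}+x^{-p}\dz)$ selects the optimal $\dz\sim x$, giving $u\in W^{1,p}$ precisely when $b<\frac2p-1$. Any extension must fill the removed cusp joining $g_+$ to $0$ across height $x^s$, which costs at least $\int_0^1|g_+(x)|^q x^{-s(q-1)}\,dx$, finite iff $b<\frac{1-s(q-1)}{q}$; comparing the two bounds yields exactly $s=\frac{pq+p-2q}{pq-p}$. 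For the endpoint parameter ranges (e.g.\ $p\ge2$) the single power is replaced by a self-similar family and by working directly in the fractional trace space.

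\textbf{Sufficiency (the ``if'' directions).} Here I would construct explicit linear extension operators realizing the optimal profiles above and verify boundedness by a weighted Hardy inequality. For the inward cusp I fill the removed region by the $y$-linear interpolation of the two boundary traces, $w(x,y)=\frac{x^s-y}{2x^s}g_-(x)+\frac{x^s+y}{2x^s}g_+(x)$, so that $\partial_y w\sim\frac{g_+-g_-}{x^s}$; bounding $|g_+-g_-|$ by $\nabla u$ over the lobes at scale $x$ reduces the gradient estimate to a one-dimensional weighted inequality that holds precisely when $s<\frac{pq+p-2q}{pq-p}$. The endpoint $(1,1)$ is then immediate: at $q=1$ the weight $x^{-s(q-1)}$ is identically $1$, the interpolation cost collapses to $\int_0^1|g_+-g_-|\,dx\le C\|\nabla u\|_{L^1(\boz^i_{t^s})}$ via integration over a bounded-overlap family of connecting tubes, and this holds for every $s$. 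For the outward cusp I extend into the exterior of the spike by the radial average $w(z)=\bar u(|z|)$ with $\bar u(r)=\bint_{\{x=r\}\cap\mathrm{cusp}}u$, so that $\int_{\mathrm{ext}}|\nabla w|^q\sim\int_0^1|\bar u'(r)|^q r\,dr$, and the map $u\mapsto\bar u$ is bounded between the relevant weighted $W^{1,p}$ and $W^{1,q}$ spaces exactly when $s<\frac{2p}{q}-1$. In both cases the tip construction is glued to a standard Lipschitz extension on the smooth part of $\partial\boz$, and a cut-off argument as in Proposition~\ref{cut-off} passes from the homogeneous to the inhomogeneous norms.

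\textbf{Main obstacle.} The delicate point is to make the sufficiency construction achieve the very same exponent that the test functions rule out, i.e.\ to prove the weighted one-dimensional (Hardy) inequalities with the \emph{sharp} power of $x$ rather than a lossy one. This forces the extension to reproduce the optimal decay rate found in the necessity step ($\dz\sim x$ for the inward cusp, the radial average for the outward cusp); any cruder choice degrades the threshold. On the necessity side the corresponding subtlety is the capacity computation for the outward cusp: one must verify that the $q$-capacity of the thin needle is governed, to leading order, by the two-dimensional segment capacity $\sim r^{2-q}$ and is insensitive to the width $r^s$, which is precisely what produces the clean exponent $\frac2q-1$ and hence the sharp constant $\frac{2p}{q}-1$.
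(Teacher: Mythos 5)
First, a point of comparison: the paper does not prove Proposition \ref{prop:extcusp} at all. It is imported verbatim from Maz'ya and Poborchi \cite{Mazya1,Mazya2,Mazya3,MP}, so your sketch can only be measured against those works, and in outline it is indeed a reconstruction of their strategy: power-type test functions plus capacity/filling lower bounds for necessity, and explicit averaging/interpolation extension operators plus weighted one-dimensional (Hardy-type) inequalities for sufficiency. Your exponent bookkeeping is consistent where you carry it out: for the outward cusp, $a<\frac{s+1-p}{p}$ against $a<\frac{2}{q}-1$ does produce $\frac{2p}{q}-1$; for the inward cusp, $b<\frac{2}{p}-1$ against $b<\frac{1-s(q-1)}{q}$ produces $\frac{pq+p-2q}{pq-p}$; and the cross-sectional averaging for the outward sufficiency, combined with H\"older in the dyadic parameter, does yield exactly $s<\frac{2p}{q}-1$. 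The passage from homogeneous to inhomogeneous norms via a cut-off is also legitimately covered by Lemma \ref{le:HSEtoSE} and Proposition \ref{cut-off}.

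However, as a proof the proposal has genuine gaps. (i) The single-power test functions exist only on part of the parameter range: for the outward cusp you need $a>0$, i.e.\ $s>p-1$, which fails for part of the range $s\ge\frac{2p}{q}-1$ when $q>2$; for the inward cusp you need $b>0$, i.e.\ $p<2$, so your necessity argument says nothing for $2\le p\le\fz$ (including the stated case $p=\fz$), and the borderline $s$ equal to the threshold needs logarithmically modified or self-similar families in every case. You flag this in one clause, but since the statement is an ``if and only if'' over all admissible $(p,q)$, this is a substantial missing portion, not an endpoint technicality. (ii) Your extension operators use pointwise boundary traces $g_\pm$ and, implicitly, their derivatives $g_\pm'$; for $u\in W^{1,p}$ these are not controlled objects, and the construction must be run with averaged (Whitney-type) traces over windows of scale comparable to $x$ or $x^s$; similarly, the radial average $\bar u(|z|)$ for the outward cusp does not agree with $u$ on the spike boundary, so a transition layer controlled by the cross-sectional Poincar\'e inequality has to be inserted. (iii) Most seriously, the quantitative heart of your inward sufficiency --- bounding $|g_+(x)-g_-(x)|$ by the gradient ``over the lobes at scale $x$'' --- provably does not reach the sharp threshold when $p<2$: chaining through balls whose radii run from $x^s$ up to $x$ is dominated by the smallest scale, giving $|g_+-g_-|\lesssim x^{s(1-2/p)}\|Du\|_{L^p(B(0,Cx)\cap\boz)}$ and hence only $s<\frac{p}{2q-p}$, which is strictly smaller than $\frac{pq+p-2q}{pq-p}$ (e.g.\ $p=\frac32$, $q=\frac65$ gives $\frac53$ versus $3$). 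So the ``main obstacle'' you identify is not merely delicate; the specific device you propose fails there, and closing it requires the finer cross-section-matched averaging and sharp weighted Hardy inequalities that Maz'ya--Poborchi actually deploy.
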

By combining Theorem \ref{thm:extension} and Proposition \ref{prop:extcusp}, we can give a new and simpler proof to the necessary part of the following proposition which is the main result in the paper \cite{IOZ} by author, Iwaniec and Onninen. The proof for the sufficient part comes from the construction of desired homeomorphisms of finite distortion in \cite{IOZ}.
\begin{prop}\label{thm:main}
Let $\boz^i_{t^s}\subset\rr^2$ be a polynomial inward cuspidal domain with the degree $1<s<\fz$. Then there exists a homeomorphism of finite distortion $h:\rr^2\onto\rr^2$ with $h(\boz^i_{t^s})=\dd$ and $K_h\in L^p(\boz^i_{t^s})\cap L^q(B(0, 2)\setminus\overline{\boz^i_{t^s}})$ for $1<p,q\leq\fz$ with $\min\{p, q\}<\fz$ if and only if $s<\frac{pq+p+2q}{pq-p}$.
\end{prop}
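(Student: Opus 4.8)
The statement is an equivalence, but only the necessary direction is the new contribution here: the sufficient direction is supplied by the explicit homeomorphism of finite distortion constructed in \cite{IOZ}, whose optimal distortion is shown there to satisfy $K_h\in L^p(\boz^i_{t^s})\cap L^q(B(0,2)\setminus\overline{\boz^i_{t^s}})$ precisely when $s<\frac{pq+p+2q}{pq-p}$. So the plan is to prove only the forward implication, that the mere \emph{existence} of such an $h$ forces $s<\frac{pq+p+2q}{pq-p}$. The idea is to convert the integrability hypothesis on $h$ into a concrete Sobolev extension statement about $\boz^i_{t^s}$ by means of Theorem \ref{thm:extension}, and then to read off the constraint on $s$ from the sharp characterization of inward cuspidal extension recorded in Proposition \ref{prop:extcusp}.

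First I would invoke Theorem \ref{thm:extension}. Since $\overline{\boz^i_{t^s}}\subset\overline{B(0,1)}\subset B(0,2)$, we may take $R=2$, and then the hypotheses $K_h\in L^p(\boz^i_{t^s})\cap L^q(B(0,2)\setminus\overline{\boz^i_{t^s}})$ are exactly those required by the theorem. Hence $\boz^i_{t^s}$ is a Sobolev $\lf(P,Q\r)$-extension domain with $P:=\frac{2p}{p-1}$ and $Q:=\frac{2q}{q+1}$. The crucial simplification is that $\boz^i_{t^s}$ \emph{is} the model inward cuspidal domain appearing in Proposition \ref{prop:extcusp}, so no localization is needed and that proposition applies directly. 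Before using it I would verify the admissibility conditions $1<Q<P\leq\fz$: the inequality $Q>1$ is equivalent to $q>1$, the inequality $P>Q$ reduces to $-q<p$ and hence holds automatically, and $P\leq\fz$ is clear. All of these hold throughout the range $1<p,q\leq\fz$.

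With these checked, the forward (``only if'') half of the inward cuspidal part of Proposition \ref{prop:extcusp}, namely that a Sobolev $\lf(P,Q\r)$-extension inward cusp must satisfy $s<\frac{PQ+P-2Q}{PQ-P}$, yields the bound. A short algebraic simplification---substituting $P=\frac{2p}{p-1}$ and $Q=\frac{2q}{q+1}$ and clearing the common factor $(p-1)(q+1)$---collapses $\frac{PQ+P-2Q}{PQ-P}$ to exactly $\frac{pq+p+2q}{pq-p}$, which is the asserted threshold. The boundary cases in which one exponent is infinite (the case $p=q=\fz$ being excluded by the hypothesis $\min\{p,q\}<\fz$) are handled by the same computation read as a limit: when $p=\fz$ one has $P=2$ and the bound becomes $\frac{q+1}{q-1}$, while when $q=\fz$ one has $Q=2$ and the bound becomes $\frac{p+2}{p}$, each agreeing with the limiting value of $\frac{pq+p+2q}{pq-p}$.

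Because the argument is a bookkeeping combination of two results I may assume, I do not expect a genuine analytic obstacle; the only points that demand care are the exponent inequalities $1<Q<P\leq\fz$ that license the application of Proposition \ref{prop:extcusp} and the infinite-exponent endpoints. As an independent consistency check I would note the complementary route: the second conclusion of Theorem \ref{thm:extension} makes $\rr^2\setminus\overline{\boz^i_{t^s}}$ a Sobolev $\lf(\frac{2q}{q-1},\frac{2p}{p+1}\r)$-extension domain, and near the cusp tip this complement is an outward cusp of the same degree $s$; feeding this into the outward cuspidal part of Proposition \ref{prop:extcusp} (after a localization to the tip, since the complement is not globally the model outward domain) produces the identical threshold $s<\frac{pq+p+2q}{pq-p}$. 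The inward route is the cleaner one, precisely because $\boz^i_{t^s}$ is already the model domain and requires no localization, so that is the argument I would write up; the sufficiency is then cited from the construction in \cite{IOZ}.
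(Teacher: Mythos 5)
Your proposal is correct and takes essentially the same route as the paper: the paper likewise proves necessity by applying Theorem \ref{thm:extension} (with $R=2$) to get that $\boz^i_{t^s}$ is a Sobolev $\lf(\frac{2p}{p-1},\frac{2q}{q+1}\r)$-extension domain and then invoking the inward-cusp part of Proposition \ref{prop:extcusp}, while citing the construction in \cite{IOZ} for sufficiency. Your substitution showing $\frac{PQ+P-2Q}{PQ-P}=\frac{pq+p+2q}{pq-p}$ for $P=\frac{2p}{p-1}$, $Q=\frac{2q}{q+1}$, the admissibility checks $1<Q<P\leq\fz$ under $\min\{p,q\}<\fz$, and the limiting endpoint cases are exactly the bookkeeping the paper leaves implicit (your complementary outward-cusp consistency check mirrors the paper's sharpness discussion).
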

Combine Proposition \ref{prop:extcusp} and Proposition \ref{thm:main}, we can also obtain the sharpness of Theorem \ref{thm:extension}. The polynomial inward cuspidal domain $\boz^i_{t^s}$ shows that there exists a bounded simply connected domain $\boz\subset\rr^2$ which satisfies assumptions of Theorem \ref{thm:extension}, such that we can not hope $\boz$ has a better Sobolev extension property than it is a Sobolev $\lf(\frac{2p}{p-1}, \frac{2q}{q+1}\r)$-extension domain and $\rr^2\setminus\overline\boz$ is a Sobolev $\lf(\frac{2q}{q-1}, \frac{2p}{p+1}\r)$-extension domain.

%In Theorem \ref{thm:extension}, we consider homeomorphisms which map bounded simply connected domains onto the unit disk. It is also natural to consider the opposite direction that homeomorphisms which map the unit disk onto bounded simply connected domains. By the fact that the inverse of a quasiconformal mappings is still quasiconformal, for $L^\fz$-quasidisks, this two directions lead to a same problem. Although, in $\rr^2$, the inverse of a homeomorphism of finite distortion is still a homeomorphism of finite distortion, see \cite[Theorem 1.6]{HK}. However, unlike quasiconformal mappings, in general, the distortion of the inverse mapping cannot have the same integral regularity as the distortion of the original homeomorphism. Hence, it is a natural project to search Sobolev extension property of images of the unit disk under homeomorphisms of finite distortion whose distortions have sufficiently nice integral regularity. In \cite{KT1, KT3, KT2, Takkinen}, Koskela and Takkinen found characterizations to polynomial cuspidal domains which are images of the unit disk under homeomorphisms of finite distortion whose distortions are locally exponentially integrable. Combine their results with Proposition \ref{prop:extcusp}, in this project, it is natural to concentrate on homeomorphisms of finite distortion whose distortions are locally exponentially integrable. 

\section{Proofs of Theorem \ref{thm:L1} and Theorem \ref{thm:extension}}
\subsection{Proof of Theorem \ref{thm:L1}}
By \cite[Theorem 1.8]{IOZ}, every simply connected Jordan domain with a  rectifiable boundary is a $L^1$-quasidisk. Hence, an inward cuspidal domain $\boz^i_{\rho}$ with cuspidal function $\rho(x)=e/\exp\lf(\frac{1}{x}\r)$ is a $L^1$-quasidisk. By Proposition \ref{prop:extcusp}, $\boz^i_{\rho}$ cannot be a Sobolev $(p, q)$-extension domain for any $1\leq q\leq p<\fz$.
\subsection{Proof of Theorem \ref{thm:extension}}
Let $\boz\subset\rr^2$ be a bounded simply connected domain with $\overline\boz\subset\subset B(0, R)$ for a large enough $R>0$. Suppose there exists a homeomorphism of finite distortion $h:\rr^2\onto\rr^2$ with $h(\boz)=\dd$ and 
\begin{equation}\label{eq:integral}
\int_\boz K_h^p(z)dz+\int_{B(0, R)\setminus\overline\boz}K_h^q(z)dz<\fz.
\end{equation}

The circle inversion map $\mathcal R:\widehat{\rr^2}\onto\widehat{\rr^2}$, 
\[\mathcal R(z):=\begin{cases}
\frac{z}{|z|^2}, &\ {\rm if}\ z\neq 0,\\
\infty, &\ {\rm if}\ z=0.
\end{cases}
\]
is an anticonformal reflection over the unit circle $\partial\DD$, which means that at every point it preserves angles and reverses orientation. Then a self-homeomorphism $\widetilde{\mathcal R}:\widehat{\rr^2}\onto\widehat{\rr^2}$ defined on every $z\in\widehat{\rr^2}$ by setting $$\widetilde{\mathcal R}(z):=h^{-1}\circ\mathcal R\circ h(z)$$ is a reflection over the boundary $\partial\boz$. %For the convenience of the following argument, we fix some notations first. Define $\widetilde{\mathcal U}:=h(B(0, R)\setminus\overline\boz)$, $\mathcal U:=\mathcal R(\widetilde{\mathcal U})$ and $U:=h^{-1}(\mathcal U)$. It is easy to see $U=\widetilde{\mathcal R}(B(0, R)\setminus\overline{\boz})$. See the figure below.
%\begin{figure}[htbp]
%\centering
%\includegraphics[width=1.0\textwidth]
%{HFD.png}\label{mapping}
%\caption{Homeomorphism of finite distortion}
%\end{figure}

With respect to different domains $\boz$ and $\rr^2\setminus\overline{\boz}$, we divide the full proof of Theorem \ref{thm:extension} into two parts. First, let us prove the Sobolev extension property of the bounded domain $\boz$.
\begin{thm}\label{thm:boz}
Under the assumption of Theorem \ref{thm:extension}, $\boz$ is a Sobolev $\lf(\frac{2p}{p-1}, \frac{2q}{q+1}\r)$-extension domain.
\end{thm}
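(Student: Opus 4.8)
The plan is to verify that the reflection $\widetilde{\mathcal R}=h^{-1}\circ\mathcal R\circ h$ induces a bounded linear extension operator from $\dot W^{1,P}(\boz)$ to $\dot W^{1,Q}(\rr^2)$ in the sense of (\ref{uu1}), where throughout I abbreviate $P=\tfrac{2p}{p-1}$ and $Q=\tfrac{2q}{q+1}$. Once this local statement on a neighborhood $U$ of $\partial\boz$ is established, Proposition \ref{cut-off} (whose hypothesis $|\partial\boz|=0$ holds because $\partial\boz=h^{-1}(\partial\dd)$ is the image of a null set under a homeomorphism of finite distortion) upgrades it to the homogeneous Sobolev $(P,Q)$-extension property, and Lemma \ref{le:HSEtoSE} then delivers the genuine Sobolev $(P,Q)$-extension property. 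The feature driving everything is that the natural intermediate exponent is exactly $2$: the inversion $\mathcal R$ is anticonformal and hence preserves the Dirichlet integral, while $P$ and $Q$ are precisely the exponents for which pre- and post-composition with $h$ transfer an $L^2$ gradient bound on the disk to the required $L^Q$ bound near $\partial\boz$ using only the assumed integrability of $K_h$.

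Concretely, I would factor $v=u\circ\widetilde{\mathcal R}$ through the disk. Set $F:=u\circ h^{-1}$ on $\dd$ and reflect it across the unit circle by $\widehat F:=F$ on $\dd$ and $\widehat F:=F\circ\mathcal R$ on $\rr^2\setminus\overline{\dd}$; then on a neighborhood of $\partial\boz$ one has $v=\widehat F\circ h$. The point of this factorization is that the gluing now takes place across the \emph{smooth} circle $\partial\dd$ rather than across the rough boundary $\partial\boz$. Since $\mathcal R$ fixes $\partial\dd$ and is a conformal diffeomorphism near it, the even reflection $\widehat F$ lies in $\dot W^{1,2}$ of an annular neighborhood of $\partial\dd$, with $\|\nabla\widehat F\|_{L^2}\lesssim\|\nabla F\|_{L^2(\dd)}$ by conformal invariance of the Dirichlet integral. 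Hence no separate gluing of the two pieces across $\partial\boz$ is required: $v$ is a single composition of one Sobolev function with the homeomorphism $h$.

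The two quantitative steps are then changes of variables combined with Hölder's inequality. First, writing $\zeta=h(w)$ and using the $2\times 2$ identity $|Dh^{-1}(h(w))|^2J_h(w)=|Dh(w)|^2/J_h(w)=K_h(w)$, the chain rule and the area formula give $\int_\dd|\nabla F|^2\le\int_\boz|\nabla u|^2K_h$, and Hölder with the pairing $\tfrac2P+\tfrac1p=1$, which is exactly the choice $P=\tfrac{2p}{p-1}$, bounds this by $\|K_h\|_{L^p(\boz)}\,\|\nabla u\|_{L^P(\boz)}^2$. Second, for $z\in U\setminus\overline\boz$ I write $\eta=h(z)$ and use $|Dh(z)|^2\le K_h(z)J_h(z)$; the change of variables turns $\int_{U\setminus\overline\boz}|\nabla(\widehat F\circ h)|^Q$ into $\int|\nabla\widehat F(\eta)|^Q\,|Dh|^Q/J_h\,d\eta$, and Hölder with the pairing $\tfrac Q2+\tfrac{2-Q}{2}=1$ reduces the distortion factor to $\int_{U\setminus\overline\boz}K_h^{Q/(2-Q)}$. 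The choice $Q=\tfrac{2q}{q+1}$ makes $\tfrac{Q}{2-Q}=q$, so this is finite because $K_h\in L^q(B(0,R)\setminus\overline\boz)$, provided $U$ is chosen as a bounded Lipschitz domain with $\partial\boz\subset U\subset h^{-1}\!\big(A(1-\varepsilon,1+\varepsilon)\big)\cap B(0,R)$ so that $h(U)$ lands in the annulus where $\widehat F$ is controlled. Combining the two estimates yields $\|\nabla v\|_{L^Q(U\setminus\overline\boz)}\le C\|\nabla u\|_{L^P(\boz)}$, while on $U\cap\boz$ one simply has $v=u$ with $\|\nabla u\|_{L^Q(U\cap\boz)}\le|U|^{1/Q-1/P}\|\nabla u\|_{L^P(\boz)}$ because $Q<P$ and $U$ is bounded; together these give the bound (\ref{uu1}).

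I expect the main obstacle to be the rigor of the composition and change-of-variables rather than the exponent bookkeeping. One must justify that $F=u\circ h^{-1}$ and $v=\widehat F\circ h$ are genuinely weakly differentiable with the chain-rule gradients, and that the area formula applies in both directions, i.e.\ that the relevant maps satisfy the Lusin $(N)$ and $(N^{-1})$ conditions and transport the Dirichlet-type integrals correctly. This is exactly where the hypotheses $K_h\in L^p(\boz)$ and $K_h\in L^q(B(0,R)\setminus\overline\boz)$ with $p,q>1$ enter essentially: they force the Sobolev regularity of $h^{-1}$ and the validity of the composition formulas used above, none of which is automatic since $h$ need not be quasiconformal. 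I would discharge this either by approximating $\widehat F$ by smooth functions and passing to the limit through the change of variables, or by invoking the standard composition theorems for Sobolev functions with planar homeomorphisms of finite distortion.
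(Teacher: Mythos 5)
Your proposal is correct and follows essentially the same route as the paper's proof: you factor the reflected function through the disk as $E_{\widetilde{\mathcal R}}(u)=E_{\mathcal R}(u\circ h^{-1})\circ h$, use $L^2$ on the disk as the intermediate space where the anticonformal inversion preserves the Dirichlet integral, and run exactly the paper's two H\"older/change-of-variables steps (the pairing $\frac{2}{P}+\frac1p=1$ with $K_{h^{-1}}(z)=K_h(h^{-1}(z))$ for Step 1, and $\frac{Q}{2-Q}=q$ for Step 3), together with the same reductions via Proposition \ref{cut-off} and Lemma \ref{le:HSEtoSE}. The measure-theoretic points you defer at the end (a.e.\ differentiability of $h$ and $h^{-1}$, vanishing of $|Dh^{-1}|$ on the zero-Jacobian set, the one-sided area formula, $|\partial\boz|=0$, and the smooth-approximation argument for gluing across $\partial\dd$) are precisely what the paper discharges by citations to Hencl--Koskela \cite{HK} and Maz'ya \cite{Mazya:book}.
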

 \begin{proof}
By Lemma \ref{le:HSEtoSE}, it is sufficient to show that $\boz$ is a homogeneous Sobolev $\lf(\frac{2p}{p-1}, \frac{2q}{q+1}\r)$-extension domain. Let $u\in\dot W^{1, \frac{2p}{p-1}}(\boz)$ be arbitrary. We define an extension function $E_{\widetilde{\mathcal R}}(u)$ on $B(0, R)$ by setting 
\begin{equation}\label{eq:exten2}
E_{\widetilde{\mathcal R}}(u)(z):=\begin{cases}
u(\widetilde{\mathcal R}(z)), &\ {\rm if}\ z\in B(0, R)\setminus\overline\boz,\\
0, &\ {\rm if}\ z\in\partial\boz,\\
u(z), &\ {\rm if}\ z\in\boz.
\end{cases}
\end{equation}
By Proposition \ref{cut-off}, it suffices to prove $E_{\widetilde{\mathcal R}}(u)\in\dot W^{1, \frac{2q}{q+1}}(B(0, R))$ with the following inequality that
\begin{equation}\label{eq:F}
\lf(\int_{B(0,R)}|DE_{\widetilde{\mathcal R}}(u)(z)|^{\frac{2q}{q+1}}dz\r)^{\frac{q+1}{2q}}\leq C\lf(\int_{\boz}|Du(z)|^{\frac{2p}{p-1}}dz\r)^{\frac{p-1}{2p}}
\end{equation}
for a positive constant $C$ independent of $u$. Since $1<\frac{2q}{q+1}\leq2\leq\frac{2p}{p-1}<\fz$, the H\"older inequality implies
\[\lf(\int_{\boz}|DE_{\widetilde{\mathcal R}}(u)(z)|^{\frac{2q}{q+1}}dz\r)^{\frac{q+1}{2q}}\leq C\lf(\int_{\boz}|Du(z)|^{\frac{2p}{p-1}}dz\r)^{\frac{p-1}{2p}}.\] 
By \cite[Theorem 4.13]{HK}, $\partial\boz$ must be of measure zero. Hence, we only need to prove 
\begin{equation}\label{eq:G}
\lf(\int_{B(0, R)\setminus\overline\boz}|DE_{\widetilde{\mathcal R}}(u)(z)|^{\frac{2q}{q+1}}dz\r)^{\frac{q+1}{2q}}\leq C\lf(\int_{\boz}|Du(z)|^{\frac{2p}{p-1}}dz\r)^{\frac{p-1}{2p}}.
\end{equation}
We define a function $v$ on $\dd$ by setting $v(z):=u\circ h^{-1}(z)$ for every $z\in\dd$ and define an extension function $E_{\mathcal R}(v)$ on $h(B(0,R))$ with $z=(x, y)$ by setting 
\begin{equation}\label{eq:exten1}
E_{\mathcal R}(v)(z):=\begin{cases}
v(\mathcal R(z)), &\ {\rm if}\ z\in h(B(0, R))\setminus\overline\dd,\\
0, &\ {\rm if}\ z\in\partial\dd,\\
v(z), &\ {\rm if}\ z\in\dd.
\end{cases}
\end{equation}
By the definitions of functions and reflections, for every $z\in B(0, R)$, we have $$E_{\widetilde{\mathcal R}}(u)(z)=E_{\mathcal R}(v)(h(z)).$$ We divide the following argument into three steps.

\textbf{Step $1$:} We would like to show $v\in\dot W^{1, 2}(\dd)$ with
\begin{equation}\label{eq:F1}
\lf(\int_{\dd}|Dv(z)|^2dz\r)^{\frac{1}{2}}\leq C\lf(\int_{\boz}|Du(z)|^{\frac{2p}{p-1}}dz\r)^{\frac{p-1}{2p}}
\end{equation}
for a positive constant $C$ independent of $u$. By \cite[Theorem 1.6 and Theorem 2.24]{HK}, $h^{-1}\in W^{1, 1}(\dd, \rr^2)$ and it is differentiable almost everywhere on $\dd$. Hence, there exists a subset $G\subset\dd$ with $|G|=|\dd|$ and $h^{-1}$ is differentiable on every point $z\in G$. By the chain rule, for every $z\in G$, we have 
\begin{equation}\label{eq:chain}
|Dv(z)|\leq |Du(h^{-1}(z))|\cdot|Dh^{-1}(z)|.
\end{equation}
 Set $A\subset G$ to be the subset such that $J_{h^{-1}}(z)>0$ for every $z\in A$. Then by \cite[Theorem 1.6]{HK}, $|Dh^{-1}(z)|=0$ for almost every $z\in G\setminus A$. Hence, we have 
 \begin{eqnarray}\label{eq:ineq1}
 \int_{\dd}|Dv(z)|^2dz&=&\int_{G}|Dv(z)|^2dz\\
 &\leq& \int_{A}|Du(h^{-1}(z))|^2\cdot|Dh^{-1}(z)|^2dz.\nonumber
 \end{eqnarray}
 If $p=\fz$, then $h\big|_\boz$ is quasiconformal. By the fact that the inverse of a quasiconformal mapping is also quasiconformal, we have 
 \begin{multline}\label{eq:QC}
 \int_A|Du(h^{-1}(z))|^2\cdot|Dh^{-1}(z)|^2dz\\
                                  \leq\int_A|Du(h^{-1}(z))|^2J_{h^{-1}}(z)\frac{|Dh^{-1}(z)|^2}{J_{h^{-1}}(z)}dz\\
                                  \leq C\int_A|Du(h^{-1}(z))|^2J_{h^{-1}}(z)dz.
 \end{multline}
If $1<p<\fz$, the H\"older inequality implies
\begin{multline}\label{eq:holder}
\int_{A}|Du(h^{-1}(z))|^2\cdot|Dh^{-1}(z)|^2dz\\
\leq \int_{A}|Du(h^{-1}(z))|^2J_{h^{-1}}^{\frac{p-1}{p}}(z)\frac{|Dh^{-1}(z)|^2}{J^{\frac{p-1}{p}}_{h^{-1}}(z)}dz\\
\leq\lf(\int_{A}|Du(h^{-1}(z))|^{\frac{2p}{p-1}}J_{h^{-1}}(z)dz\r)^{\frac{p-1}{p}}\\
\times\lf(\int_{A}\frac{|Dh^{-1}(z)|^{2p}}{J^p_{h^{-1}}(z)}J_{h^{-1}}(z)dz\r)^{\frac{1}{p}}.
\end{multline}
For every $1<p\leq\fz$, the change of variables formula implies 
\begin{equation}\label{eq:cha1}
\int_{A}|Du(h^{-1}(z))|^{\frac{2p}{p-1}}J_{h^{-1}}(z)dz\leq \int_{h^{-1}(A)}|Du(z)|^{\frac{2p}{p-1}}dz.
\end{equation}
By \cite[Lemma A.29]{HK}, $h$ is differentiable on every $w\in h^{-1}(A)$ with $Dh(w)=(Dh^{-1}(z))^{-1}$ for $w=h^{-1}(z)$. Hence, for every $z\in A$ with $w=h^{-1}(z)$, we have 
\begin{equation}
K_{h^{-1}}(z)=\frac{|Dh^{-1}(z)|^2}{J_{h^{-1}}(z)}=\frac{|Dh(w)|^2}{J_h(w)}=K_h(w).\nonumber
\end{equation}
Hence, the change of variables formula implies 
\begin{equation}\label{eq:cha2}
\int_{A}\frac{|Dh^{-1}(z)|^{2p}}{J^p_{h^{-1}}(z)}J_{h^{-1}}(z)dz\leq \int_{\boz}K_h^p(z)dz.
\end{equation}
By combing inequalities (\ref{eq:ineq1}), (\ref{eq:QC}), (\ref{eq:holder}), (\ref{eq:cha1}) and (\ref{eq:cha2}), we obtain the desired inequality (\ref{eq:F1}).

\textbf{Step $2$:} We prove $E_{\mathcal R}(v)\in\dot W^{1, 2}(h(B(0, R)))$ with
\begin{equation}\label{eq:F2}
\int_{h(B(0, R))}|DE_\mathcal R(v)(z)|^2dz\leq C\int_{\dd}|Dv(z)|^2dz
\end{equation}
for a positive constant $C$ independent of $v$. By \cite[Lemma 4.2]{HKjam}, $v\in\dot W^{1,2}(\dd)=W^{1, 2}(\dd)$. Since $C^\fz(\rr^2)\cap W^{1, 2}(\dd)$ is dense in $W^{1, 2}(\dd)$, we can find a sequence of functions $v_k\in C^\fz(\rr^2)\cap W^{1, 2}(\dd)$ with $$\lim_{k\to\fz}v_k(z)=v(z)$$ for almost every $z\in\dd$, and 
\begin{equation}\label{eq:eq2}
\lim_{k\to\fz}\|v_k-v\|_{\dot W^{1, 2}(\dd)}=0.
\end{equation}
 With an extra diagonal argument if necessary, we can also assume the sequence of weak gradients $\{Dv_k\}$ converges to $Dv$ almost everywhere on $\dd$. For every $v_k\in C^\fz(\rr^2)\cap W^{1, 2}(\dd)$, we define the extension function $E_\mathcal R(v_k)$ as in (\ref{eq:exten1}). Since $v_k\in C^\fz(\rr^2)\cap W^{1, 2}(\dd)$, $E_\mathcal R(v_k)$ is $ACL$ on $h(B(0, R))$. By definition (\ref{eq:exten1}) and the fact that $\mathcal R$ is anticonformal, it is easy to see that we have 
\begin{equation}\label{eq:eq1}
\int_{h(B(0, R))}|DE_\mathcal R(v_k)(z)|^2dz\leq C\int_{\dd}|Dv_k(z)|^2dz,
\end{equation}
for a positive constant $C$ independent of $k$. Since $v_k$ converges to $v$ almost everywhere on $\dd$, by (\ref{eq:exten1}), $E_\mathcal R(v_k)$ converges to $E_\mathcal R(v)$ almost everywhere on $h(B(0, R))$. Since $\{Dv_k\}$ converges to $Dv$ almost everywhere on $\dd$, by the definitions of $E_\mathcal R(v_k)$ and $E_\mathcal R(v)$ in (\ref{eq:exten1}), $\{DE_\mathcal R(v_k)\}$ converges to $DE_\mathcal R(v)$ almost everywhere on $\rr^2$. By (\ref{eq:eq2}) and (\ref{eq:eq1}), we have 
\begin{equation}\label{eq:eq3}
\lim_{k\to\fz}\int_{h(B(0, R))}|DE_\mathcal R(v_k)(z)-DE_\mathcal R(v)(z)|^2dz=0.
\end{equation}
Then by Theorem $1$ in \cite[Section 1.1.13]{Mazya:book}, $E_\mathcal R(v)\in\dot W^{1, 2}(h(B(0, R)))$ with the weak gradient $DE_\mathcal R(v)$. Combine (\ref{eq:eq2}), (\ref{eq:eq1}) and (\ref{eq:eq3}), we obtain 
\begin{equation}\label{eq:eq4}
\int_{h(B(0, R))}|DE_\mathcal R(v)(z)|^2dz\leq C\int_{\dd}|Dv(z)|^2dz,
\end{equation}
for a positive constant independent of $u$.  

\textbf{Step $3$:} We prove $E_{\widetilde{\mathcal R}}(u)\in\dot W^{1, \frac{2q}{q+1}}(B(0,R))$ and we have 
\begin{multline}\label{eq:F3}
\lf(\int_{B(0, R)\setminus\overline\boz}|DE_{\widetilde{\mathcal R}}(u)(z)|^{\frac{2q}{q+1}}dz\r)^{\frac{q+1}{2q}}\\
\leq C\lf(\int_{h(B(0, R))}|DE_\mathcal R(v)(z)|^2dz\r)^{\frac{1}{2}}
\end{multline}
for a positive constant $C$ independent of $u$. As we know,  $E_{\widetilde{\mathcal R}}(u)(z)=E_\mathcal R(v)(h(z))$ for every $z\in B(0, R)$. By \cite[Theorem 1.7 and Theorem 2.24]{HK}, $h$ is differentiable almost everywhere on $B(0, R)\setminus\overline\boz$ with positive determinant value. Hence, there exists a subset $\widehat G\subset B(0, R)\setminus\overline\boz$ with $|\widehat G|=|B(0, R)\setminus\overline\boz|$ and $h$ is differentiable on every point in $z\in\widehat G$ with $J_h(z)>0$. By the chain rule, for every $z\in\widehat G$, we have 
\begin{equation}\label{eq:chain2}
|DE_{\widetilde{\mathcal R}}(u)(z)|\leq |DE_\mathcal R(v(h(z)))|\cdot|Dh(z)|.
\end{equation}
If $q=\fz$, then $h$ restricts to $B(0, R)\setminus\overline\boz$ is quasiconformal. Then we have
\begin{multline}\label{eq:QC1}
\int_{B(0, R)\setminus\overline\boz}|DE_{\widetilde{\mathcal R}}(u)(z)|^2dz\\
\leq\int_{B(0, R)\setminus\overline\boz}|DE_{\mathcal R}(v)(h(z))|^2J_h(z)\cdot\frac{|Dh(z)|^2}{J_h(z)}dz\\
\leq C\int_{B(0, R)\setminus\overline\boz}|DE_{\mathcal R}(v)(h(z))|^2J_h(z)dz.
\end{multline}
If $1<q<\fz$, then we have 
\begin{multline}\label{eq1}
\int_{B(0, R)\setminus\overline\boz}|DE_{\widetilde{\mathcal R}}(u)(z)|^{\frac{2q}{q+1}}dz=\int_{\widehat G}|DE_{\widetilde{\mathcal R}}(u)(z)|^{\frac{2q}{q+1}}dz\\
   \leq\int_{\widehat G}|DE_{\mathcal R}(v)(h(z))|^{\frac{2q}{q+1}}\cdot|Dh(z)|^{\frac{2q}{q+1}}dz.
\end{multline}

 The H\"older inequality implies 
\begin{multline}\label{eq2}
\int_{\widehat G}|DE_\mathcal R(v)(h(z))|^{\frac{2q}{q+1}}\cdot|Dh(z)|^{\frac{2q}{q+1}}dz\\
\leq\int_{\widehat G}|DE_\mathcal R(v)(h(z))|^{\frac{2q}{q+1}}J_h^{\frac{q}{q+1}}(z)\frac{|Dh(z)|^{\frac{2q}{q+1}}}{J_h^{\frac{q}{q+1}}(z)}dz\\
\leq\lf(\int_{\widehat G}|DE_\mathcal R(v)(h(z))|^2J_h(z)dz\r)^{\frac{q}{q+1}}\\
\times\lf(\int_{\widehat G}\lf(\frac{|Dh(z)|^2}{J_h(z)}\r)^qdz\r)^{\frac{1}{q}}.
\end{multline}
The change of variables formula implies 
\begin{equation}\label{eq3}
\int_{\widehat G}|DE_\mathcal R(v)(h(z))|^2J_h(z)dz\leq \int_{h(B(0, R))}|DE_\mathcal R(v)(z)|^2dz.
\end{equation}
By the definition of distortion function and the integral condition (\ref{eq:integral}), we have 
\begin{equation}\label{eq4}
\int_{\widehat G}\lf(\frac{|Dh(z)|^2}{J_h(z)}\r)^qdz\leq\int_{B(0, R)\setminus\overline\boz}K_h^q(z)dz<\fz.
\end{equation}
Combine inequalities (\ref{eq:QC1})-(\ref{eq4}), we obtain the desired inequality (\ref{eq:F3}).

Finally, combine inequalities (\ref{eq:F1}), (\ref{eq:F2}) and (\ref{eq:F3}), we obtain the desired inequality (\ref{eq:G}) and finish the proof of Theorem \ref{thm:extension}.
\end{proof}

Next, we consider the Sobolev extension property of the complementary domain $\rr^2\setminus\overline\boz$. 
\begin{thm}\label{thm:out}
Under the assumption of Theorem \ref{thm:extension}, $\rr^2\setminus\overline\boz$ is a Sobolev $\lf(\frac{2q}{q-1}, \frac{2p}{p+1}\r)$-extension domain.
\end{thm}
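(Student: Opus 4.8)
The plan is to run the argument symmetrically to the proof of Theorem~\ref{thm:boz}, interchanging the roles of $p$ and $q$ and reversing the reflection so as to extend functions from the exterior domain $\rr^2\setminus\overline\boz$ across $\partial\boz$ into $\boz$. By Proposition~\ref{cut-off} it suffices to show that the same reflection $\widetilde{\mathcal R}=h^{-1}\circ\mathcal R\circ h$ induces a bounded linear Sobolev $\left(\frac{2q}{q-1},\frac{2p}{p+1}\right)$-extension operator for $\rr^2\setminus\overline\boz$, which yields the (homogeneous) Sobolev extension property. Concretely, we fix a bounded Lipschitz neighborhood $U$ of $\partial\boz$, small enough to exclude the single interior point $h^{-1}(0)$ (the only point of $\boz$ that $\widetilde{\mathcal R}$ carries to $\infty$); then, for $u\in\dot W^{1,\frac{2q}{q-1}}(\rr^2\setminus\overline\boz)$, we must prove that the function equal to $u$ on $U\setminus\overline\boz$ and to $u\circ\widetilde{\mathcal R}$ on $U\cap\boz$ lies in $\dot W^{1,\frac{2p}{p+1}}(U)$ with the correct bound. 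Since $\frac{2p}{p+1}\le 2\le\frac{2q}{q-1}$, the H\"older inequality controls the contribution over $U\setminus\overline\boz$ for free, so all the work lies in the estimate over $U\cap\boz$.

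This estimate factors through the unit disk in three steps, each a mirror image of the corresponding step above. First we push $u$ forward to the exterior of $\dd$: setting $v=u\circ h^{-1}$ on the bounded exterior collar $h(U\setminus\overline\boz)\subset\rr^2\setminus\overline\dd$, the chain rule \eqref{eq:chain} (applied on the set where $h^{-1}$ is differentiable with $J_{h^{-1}}>0$, its complement being negligible) together with H\"older (exponents $\frac{q}{q-1}$ and $q$, splitting off $J_{h^{-1}}^{\frac{q-1}{q}}$) gives $v\in\dot W^{1,2}$ with
\[
\int|Dv|^2\le\left(\int|Du(h^{-1})|^{\frac{2q}{q-1}}J_{h^{-1}}\right)^{\frac{q-1}{q}}\left(\int K_{h^{-1}}^{q}J_{h^{-1}}\right)^{\frac1q},
\]
where two applications of the change of variables, together with $K_{h^{-1}}(z)=K_h(h^{-1}(z))$, identify the first factor with the $\dot W^{1,\frac{2q}{q-1}}$ seminorm of $u$ over $U\setminus\overline\boz$ (to the appropriate power) and the second with $\int_{U\setminus\overline\boz}K_h^{q}\le\int_{B(0,R)\setminus\overline\boz}K_h^{q}<\fz$, the $L^q$ half of \eqref{eq:integral}. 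Next, because $\mathcal R$ is anticonformal and maps $\rr^2\setminus\overline\dd$ onto $\dd$, reflecting $v$ inward produces $E_{\mathcal R}(v)$ with $\int|DE_{\mathcal R}(v)|^2\le C\int|Dv|^2$, proved exactly as Step~2 by smooth approximation and passage to the limit via closedness of the weak gradient. Finally we pull back into $\boz$: since $E_{\widetilde{\mathcal R}}(u)=E_{\mathcal R}(v)\circ h$ on $U\cap\boz$ and $h$ is a.e.\ differentiable there with $J_h>0$, the chain rule \eqref{eq:chain2} and H\"older (exponents $\frac{p+1}{p}$ and $p+1$) yield
\[
\int_{U\cap\boz}|DE_{\widetilde{\mathcal R}}(u)|^{\frac{2p}{p+1}}\le\left(\int|DE_{\mathcal R}(v)(h)|^2J_h\right)^{\frac{p}{p+1}}\left(\int K_h^{p}\right)^{\frac1{p+1}},
\]
and a last change of variables bounds the first factor by $\int|DE_{\mathcal R}(v)|^2$ and the second by $\int_\boz K_h^{p}<\fz$, the $L^p$ half of \eqref{eq:integral}. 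Chaining the three estimates gives the claim, with the endpoint cases $q=\infty$ and $p=\infty$ treated by the quasiconformal computations \eqref{eq:QC} and \eqref{eq:QC1} as before.

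Because the analysis of Steps~1--3 is a genuine mirror image, we do not expect a new analytic difficulty there: the regularity facts of \cite{HK} (a.e.\ differentiability of $h^{-1}$ on the exterior of $\dd$ and of $h$ on $\boz$ with positive Jacobian, and the distortion identity) are simply applied on the swapped domains. The main obstacle we anticipate is organizational and stems from the unboundedness of $\rr^2\setminus\overline\boz$: Lemma~\ref{le:HSEtoSE} is no longer available, so the argument must be run entirely through the localized reflection on the bounded Lipschitz neighborhood $U$ furnished by Proposition~\ref{cut-off}, and one must verify that $U$ can be chosen to contain $\partial\boz$, to be Lipschitz, and to avoid $h^{-1}(0)$, so that $\widetilde{\mathcal R}$ stays finite and all the change-of-variables formulae remain valid while the far-field behaviour of $u$ never enters the estimates.
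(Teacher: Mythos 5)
Your three-step scheme is precisely the paper's own proof of this theorem: the paper sets $v=u\circ h^{-1}$ and proves $v\in\dot W^{1,2}$ by H\"older against the $L^q$ half of (\ref{eq:integral}) (its inequality (\ref{eq:F11})), reflects across $\partial\dd$ by smooth approximation and closedness of the weak gradient on an annulus (its (\ref{eq:EQ4})), and pulls back by $h$ with H\"older against the $L^p$ half (its (\ref{eq:F33})); your exponent bookkeeping ($\frac{q}{q-1},q$ in the push-forward, $\frac{p+1}{p},p+1$ in the pull-back) and the distortion identity $K_{h^{-1}}(z)=K_h(h^{-1}(z))$ match the paper exactly, as does the treatment of the endpoint cases. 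One precision on your collar: the paper fixes $\epsilon_1,\epsilon_2>0$ with $B(0,1+\epsilon_1)\subset h(B(0,R))$ and $\mathcal R(A(1-\epsilon_2,1+\epsilon_1))=A(1-\epsilon_2,1+\epsilon_1)$, and extends only into $\mathcal A=h^{-1}(A(1-\epsilon_2,1))$. This is the correct quantitative form of your requirement that $U$ avoid $h^{-1}(0)$: merely excluding the single point from the open set $U$ is not enough, since $U\cap\boz$ could still accumulate at $h^{-1}(0)$; you need $\overline{U}$ bounded away from $h^{-1}(0)$, so that $h(\overline{U\cap\boz})\subset\overline{A(1-\epsilon_2,1)}$ and the reflected collar lands inside $h(B(0,R))\setminus\overline\dd$, where $v$ has finite Dirichlet energy.

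The one substantive loose end is your handling of unboundedness. You discard Lemma \ref{le:HSEtoSE} as ``no longer available'' and stop at the homogeneous extension property delivered by Proposition \ref{cut-off}; but the theorem claims the full Sobolev $\lf(\frac{2q}{q-1},\frac{2p}{p+1}\r)$-extension property, and your proposal never upgrades from the homogeneous seminorm bound to the $W^{1,\frac{2p}{p+1}}$ norm bound. The paper bridges this gap in the opposite order: it first observes that $\rr^2\setminus\overline\boz$ has the same Sobolev extension property as the \emph{bounded} domain $B(0,R)\setminus\overline\boz$ (restrict, extend, and glue back with $u$ outside $B(0,R)$ by a cutoff), and then applies Lemma \ref{le:HSEtoSE} to that bounded domain, so the lemma is in fact recovered rather than lost. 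Your argument can be completed the same way in a line or two, so this is a fixable omission of a reduction step rather than a failure of method.
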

The idea of the proof is very similar with the proof of last theorem. The main difference is that $\rr^2\setminus\overline\boz$ is unbounded here. Hence, we give a sketch proof here.
\begin{proof}
From the geometrical viewpoint, it is easy to see $\rr^2\setminus\overline\boz$ has a same Sobolev extension property with the bounded domain $B(0, R)\setminus\overline\boz$. Hence, by Lemma \ref{le:HSEtoSE}, it suffices to prove $B(0, R)\setminus\overline\boz$ is a homogeneous Sobolev $\lf(\frac{2q}{q-1}, \frac{2p}{p+1}\r)$-extension domain. Since $h:\rr^2\onto\rr^2$ is a homeomorphism with $h(\boz)=\dd$, there exist two positive constants $\epsilon_1>0$ and $\epsilon_2>0$ such that $B(0, 1+\epsilon_1)\subset h(B(0, R))$ and $$\mathcal R(A(1-\epsilon_2,1+\epsilon_1))=A(1-\epsilon_2, 1+\epsilon_1).$$ We define $\mathcal A:=h^{-1}(A(1-\epsilon_2, 1))$, $\mathbf U:=\lf(B(0, R)\setminus\boz\r)\cup\mathcal A$ and $\widetilde{\mathbf U}:=h(\mathbf U)$. Let $u\in\dot W^{1, \frac{2q}{q-1}}(\rr^2\setminus\overline\boz)$ be arbitrary. Then $$u\big|_{B(0, R)\setminus\overline\boz}\in\dot W^{1, \frac{2q}{q-1}}(B(0, R)\setminus\overline\boz).$$ To simplify the notation, we still denote $u\big|_{B(0, R)\setminus\overline{\boz}}$ by $u$. Then we define a function $E_{\widetilde{\mathcal R}}(u)$ on $\mathbf U$ by setting 
\begin{equation}\label{eq:Exten1}
E_{\widetilde{\mathcal R}}(u)(z):=\begin{cases}
u(\widetilde{\mathcal R}(z)), &\ {\rm if}\ z\in\mathcal A,\\
0, & \ {\rm if}\ z\in\partial\boz,\\
u(z), & \ {\rm if}\ z\in B(0, R)\setminus\overline{\boz}.
\end{cases}
\end{equation}
It suffices to prove $E_{\widetilde{\mathcal R}}(u)\in\dot W^{1, \frac{2p}{p+1}}(\mathbf U)$ with 
\begin{multline}\label{eq:E1}
\lf(\int_{\mathbf U}|DE_{\widetilde{\mathcal R}}(u)(z)|^{\frac{2p}{p+1}}dz\r)^{\frac{p+1}{2p}}\\
\leq C\lf(\int_{B(0, R)\setminus\overline\boz}|Du(z)|^{\frac{2q}{q-1}}dz\r)^{\frac{q-1}{2q}}.
\end{multline}
for a positive constant $C$ independent of $u$. By the definition of $E_{\widetilde{\mathcal R}}(u)$, the H\"older inequality implies
\begin{multline}
\lf(\int_{B(0, R)\setminus\overline\boz}|DE_{\widetilde{\mathcal R}}(u)(z)|^{\frac{2p}{p+1}}\r)^{\frac{p+1}{2p}}\\
\leq C\lf(\int_{B(0, R)\setminus\overline\boz}|Du(z)|^{\frac{2q}{q-1}}dz\r)^{\frac{q-1}{2q}}
\end{multline}
for a positive constant $C$ independent of $u$. By \cite[Theorem 4.13]{HK}, $\partial\boz$ must be of measure zero. Hence, it suffices to prove the inequality 
\begin{multline}\label{eq:E11}
\lf(\int_{\mathcal A}|DE_{\widetilde{\mathcal R}}(u)(z)|^{\frac{2p}{p+1}}dz\r)^{\frac{p+1}{2p}}\\
\leq C\lf(\int_{B(0, R)\setminus\overline\boz}|Du(z)|^\frac{2q}{q-1}dz\r)^{\frac{q-1}{2q}}
\end{multline}
for a positive constant $C$ independent of $u$. We define a function $v\in h(B(0, R)\setminus\overline{\boz})$ by setting $v(z):=u(h^{-1}(z))$ for every $z\in h(B(0, R)\setminus\overline\boz)$. By a similar argument to the inequality (\ref{eq:F1}), we obtain $v\in\dot W^{1, 2}(h(B(0, R)\setminus\overline\boz))$ with
\begin{equation}\label{eq:F11}
\lf(\int_{h(B(0, R)\setminus\overline\boz)}|Dv(z)|^2dz\r)^{\frac{1}{2}}\leqq C\lf(\int_{B(0,R)\setminus\overline\boz}|Du(z)|^{\frac{2q}{q-1}}\r)^{\frac{q-1}{2q}}
\end{equation}
for a positive constant $C$ independent of $u$. Define an extension function $E_{\mathcal R}(v)$ on $\widetilde{\mathbf U}$ by setting
\begin{equation}\label{eq:Exten2}
E_{\mathcal R}(v)(z):=\begin{cases}
v(\mathcal R(z)), &\ {\rm if}\ z\in A(1-\epsilon_2, 1),\\
0, &\ {\rm if}\ z\in\partial\dd,\\
v(z), &\ {\rm if}\ z\in\widetilde{\mathbf U}\setminus\overline{\dd}.
\end{cases}
\end{equation}
It is easy to see for every $z\in\mathbf U$, we have 
\[E_{\widetilde{\mathcal R}}(u)(z)=E_{\mathcal R}(v)(h(z)).\]
 By (\ref{eq:F11}), $v\big|_{A(1, 1+\epsilon_1)}$ belongs to the class $\dot W^{1, 2}(A(1, 1+\epsilon_1))$. To simplify the notation, we still denote $v\big|_{A(1, 1+\epsilon_1)}$ by $v$. By facts that $\mathcal R(A(1-\epsilon_2, 1+\epsilon_1))=A(1-\epsilon_2, 1+\epsilon_1)$, $\dot W^{1, 2}(A(1-\epsilon_2, 1+\epsilon_1))=W^{1, 2}(A(1-\epsilon_2, 1+\epsilon_1))$ and $C^\fz(\rr^2)\cap W^{1, 2}(A(1-\epsilon_2, 1+\epsilon_1))$ is dense in $W^{1, 2}(A(1-\epsilon_2, 1+\epsilon_1))$. With a similar argument to the inequality (\ref{eq:eq4}), we obtain $E_\mathcal R(v)\in\dot W^{1, 2}(\widetilde{\mathbf U})$ with 
 \begin{multline}\label{eq:EQ4}
 \int_{A(1-\epsilon_2, 1+\epsilon_1)}|DE_{\mathcal R}(v)(z)|^2dz\\
 \leq C\int_{A(1, 1+\epsilon_1)}|Dv(z)|^2dz\\
 \leq C\int_{h(B(0, R)\setminus\overline\boz)}|Dv(z)|^2dz
 \end{multline}
 for a positive constant $C$ independent of $u$. Then, with a similar argument to the inequality (\ref{eq:F3}), we obtain the inequality 
 \begin{multline}\label{eq:F33}
 \lf(\int_{\mathcal A}|DE_{\mathcal R}(u)(z)|^{\frac{2p}{p+1}}dz\r)^{\frac{p+1}{2p}}\\
  \leq C\lf(\int_{A(1-\epsilon_2, 1)}|DE_\mathcal R(v)(z)|^2dz\r)^{\frac{1}{2}}
 \end{multline}
 for a positive constant $C$ independent of $u$. 
 
 Finally, by combining inequalities (\ref{eq:F11}), (\ref{eq:EQ4}) and (\ref{eq:F33}), we obtain the desired inequality (\ref{eq:E11}).
\end{proof}

\section{Sharpness and applications of Theorem \ref{thm:extension}} 
\subsection{Sharpness of Theorem \ref{thm:extension}}
In this section, we discuss the sharpness of Theorem \ref{thm:extension}. Let $\boz=\boz^i_{t^s}\subset\rr^2$ be a polynomial inward cuspidal domain with the degree $1<s<\fz$ such that there exists a homeomorphism of finite distortion $h:\rr^2\onto\rr^2$ with $h(\boz)=\dd$, and the distortion function satisfies the integral condition (\ref{eq:integral}). It is easy to see $\rr^2\setminus\overline{\boz^i_{t^s}}$ has a same Sobolev extension property with the polynomial outward cuspidal domain $\boz^o_{t^s}$. By combining Proposition \ref{prop:extcusp} and Proposition \ref{thm:main}, we obtain that we cannot obtain a better Sobolev extension result than $\boz$ is a Sobolev $\lf(\frac{2p}{p-1}, \frac{2q}{q+1}\r)$-extension domain and $\rr^2\setminus\overline{\boz^i_{t^s}}$ is a Sobolev $\lf(\frac{2q}{q-1}, \frac{2p}{p+1}\r)$-extension domain. It shows the sharpness of Theorem \ref{thm:extension}.

\subsection{A new proof of the necessity of Proposition \ref{thm:main}} Actually, by combing Theorem \ref{thm:extension} and Proposition \ref{prop:extcusp}, we give a new proof to the necessary part of Proposition \ref{thm:main}, which is entirely different with the proof in paper \cite{IOZ} where we gave an argument based on some geometrical facts.

\subsection{An application of Theorem \ref{thm:extension}}
In this section, we prove Theorem \ref{thm:exp}, which can be regarded as an application of Theorem \ref{thm:extension}.
\begin{proof}[Proof of Theorem \ref{thm:exp}]
By \cite[Theorem 1.6]{HKarma}, there exists a positive constant $0<\beta<2$ such that for every homeomorphism of finite distortion $h:\rr^2\onto\rr^2$ with $\exp\lf(\lambda K_h\r)\in L^1_{\rm loc}(\rr^2)$ with $\lambda>0$, we have $K_{h^{-1}}\in L^{p_\lambda}_{\rm loc}(\rr^2)$ with $p_\lambda=\beta\lambda$. Let $h:\rr^2\onto\rr^2$ be a homeomorphism of finite distortion with $\exp\lf(\lambda K_h\r)\in L^1_{\rm loc}(\rr^2)$ for some large enough $\lambda>0$ with $p_\lambda=\beta\lambda>1$. Then by Theorem \ref{thm:extension}, $\boz:=h^{-1}(\dd)$ is a $L^{p_\lambda}$-quasidisk and is a Sobolev $\lf(\frac{2p_\lambda}{p_\lambda-1}, \frac{2p_\lambda}{p_\lambda+1}\r)$-extension domain.
\end{proof}

\end{document}